\newcommand{\dd}{{\rm d}}
\newcommand{\bit}{\begin{itemize}}
\newcommand{\eit}{\end{itemize}}
\newtheorem{theorem}{Theorem}[section]
\newtheorem{corollary}[theorem]{Corollary}
\newtheorem{proposition}[theorem]{Proposition}
\theoremstyle{definition}
\newtheorem{definition}[theorem]{Definition}
\theoremstyle{remark}
\newtheorem{remark}[theorem]{Remark}
\begin{document}

\title{Compactification of closed preordered spaces}




\author{E. Minguzzi\thanks{
Dipartimento di Matematica Applicata ``G. Sansone'', Universit\`a
degli Studi di Firenze, Via S. Marta 3,  I-50139 Firenze, Italy.
E-mail: ettore.minguzzi@unifi.it} }

\date{}

\maketitle

\begin{abstract}
\noindent A topological preordered space admits a Hausdorff
$T_2$-preorder compactification  if and only if it is Tychonoff and
the preorder is represented by the family of continuous isotone
functions. We construct the largest Hausdorff $T_2$-preorder
compactification for these spaces and clarify its relation with
Nachbin's compactification. Under local compactness the problem of
the existence and identification of the smallest Hausdorff
$T_2$-preorder compactification is considered.
\end{abstract}




{}
\section{Introduction}

A {\em topological preordered space} is a triple
$(E,\mathscr{T},\le)$ where $(E,\mathscr{T})$ is a topological space
and $\le$ is a {\em preorder} on $E$, namely a reflexive and
transitive relation on $E$. The preorder is an {\em order} if it is
antisymmetric. There are many possible compatibility conditions
between topology and preorder that can be added to this basic
structure. We shall mainly consider the {\em $T_2$-preordered
spaces} ({\em closed preordered spaces}), namely those spaces for
which the graph
\[
G(\le)=\{(x,y): x\le y\},
\]
is  closed in the product topology $\mathscr{T}\times \mathscr{T}$
of $E\times E$. In this work we shall follow Nachbin's terminology
\cite{nachbin65} but we remark that in computer science
$T_2$-ordered spaces are very much studied and called {\em
pospaces}.


A $T_2$-preordered space $E$ is a $T_1$-preordered space in the
sense that for every $x\in E$, $i(x)$ and $d(x)$ are closed where
$i(x)=\{y\in E: x\le y\}$ is the increasing hull and $d(x)=\{y\in E:
y \le x\}$ is the decreasing hull.

We recall that an {\em isotone} function $f:E\to \mathbb{R}$ is a
function such that $x\le y \Rightarrow f(x)\le f(y)$. We shall
mostly work with continuous isotone functions with value in [0,1],
although we could equivalently work with  bounded continuous isotone
functions.

In this work we shall consider the problem of compactification for
$T_2$-preordered spaces. It is understood here that the
compactification $cE$ must be endowed with a preorder $\le_c$ which
induces $\le$ on $E$, namely if $x,y\in E$, then $x\le y$ if and
only if $x \le_c y$. The extended preorder is also demanded to be
closed.

In the ordered case this problem has been solved by Nachbin who
proved  \cite{nachbin65,fletcher82,nada86} that a topological
ordered space admits a $T_2$-order compactification if and only if
it is a {\em completely regularly ordered space}, where a {\em
completely regularly preordered space} is a topological preordered
space for which the following two conditions hold
\begin{itemize}
\item[(i)] $\mathscr{T}$ coincides with the initial topology generated by the set of
continuous isotone functions $f: E\to [0,1]$,
\item[(ii)] $x\le y$ if and only if for every continuous isotone function
$f: E\to [0,1]$, $f(x)\le f(y)$.
\end{itemize}
For future reference let us introduce the equivalence relation
$x\sim y$ on $E$, given by ``$x\le y$ and $y \le x$''. Let
$E/\!\!\sim$ be the quotient space, $\mathscr{T}/\!\!\sim$ the
quotient topology, and let $\lesssim$ be defined by, $[x]\lesssim
[y]$ if $x\le y$ for some representatives (with some abuse of
notation we shall denote with $[x]$ both a subset of $E$ and a point
on $E/\!\!\sim$). The quotient preorder is by construction an order.
The triple $(E/\!\!\sim,\mathscr{T}/\!\!\sim,\lesssim)$ is a
topological ordered space and $\pi: E\to E/\!\!\sim$ is the
continuous quotient projection.

Nachbin  proves \cite[Prop.\ 8]{nachbin65}  that the completely
regularly preordered spaces can be characterized as those
topological preordered spaces $(E,\mathscr{T},\le)$ which come from
a quasi-uniformity $\mathcal{U}$, in the sense that
$\mathscr{T}=\mathscr{T}(\mathcal{U}^{*})$ and $G(\le)=\bigcap
\mathcal{U}$ (see \cite{nachbin65,fletcher82} for details on
quasi-uniformities). Note that for these spaces, by (i) above,
$(E,\mathscr{T})$ is completely regular but not necessarily
Hausdorff (equivalently $T_1$). Nevertheless, from (ii) it follows
that $E$ is a $T_2$-preordered space, hence $T_1$-preordered thus
$[x]=d(x)\cap i(x)$ is closed. We conclude that in a completely
regularly preordered space, $\mathscr{T}$ is $T_1$, and hence
$(E,\mathscr{T})$ is a Tychonoff space, if and only if $\le$ is an
order \cite{nachbin65}.

In this work we look for topological preordered spaces that admit a
Hausdorff $T_2$-preordered compactification. Since the
$T_2$-preorder property is hereditary, and every topological space
that admits a Hausdorff compactification is Tychonoff, the class
that we are considering is contained in the family of
$T_2$-preordered Tychonoff spaces. In fact we shall see  that all
these spaces admit a $T_2$-preorder compactification provided the
family of continuous isotone functions determines the preorder. We
shall then look for the largest Hausdorff $T_2$-preorder
compactification and we shall clarify its connection with Nachbin's
$T_2$-order compactification. We will end the paper with a
discussion of the smallest Hausdorff $T_2$-preorder
compactification.

\section{A motivation: the spacetime boundary}

Since the next sections will be particularly abstract, it will be
convenient to motivate this study mentioning an application. This
author is particularly interested in general relativity, but the
reader will easily find other  applications in closely related
fields, for instance, in  dynamical systems theory.

This author's interest for the compactifications of closed
preordered spaces comes from the well-known problem  of attaching a
boundary to a spacetime (physicists term {\em boundary} what is
known as {\em remainder} in topology). We recall that a spacetime is
a connected, Hausdorff, time oriented Lorentzian manifold and is
denoted $(M,g)$, where $g$ is the Lorentzian metric. In relativity
theory the concept of singularity has proved to be quite elusive.
One would like to attach a boundary to spacetime so as to
distinguish between points at infinity and singularities, where the
distinction is made considering the behavior of the Riemann tensor
near the boundary point (e.g. diverging or not).

There have been numerous attempts to construct such a boundary. We
mention Penrose's conformal boundary \cite{penrose64},  Geroch,
Kronheimer and Penrose's causal boundary \cite{geroch72},  Scott and
Szekeres' abstract boundary \cite{scott94}, and various other
proposals by Budic and Sachs \cite{budic74}, Racz
\cite{racz87b,racz88}, Szabados \cite{szabados88,szabados89}, Harris
\cite{harris98}, Flores \cite{flores06b} and others. Apart for the
case of Penrose's conformal boundary, which cannot be applied in
general, one does not demand that spacetime plus the boundary be
still a manifold. In general, one wishes just to preserve some
notion of continuity and provide a way of extending the causal
relation to the boundary.

The above constructions are often quite involved. I propose a
strategy which takes advantage of the fact that any spacetime is a
topological preordered space. Let us clarify this point. The {\em
causal relation} $J^{+}$ on $M$ is given by the pairs $(x,y)$ of
points of $M$ for which there is a $C^1$ curve $\gamma: [0,1]\to M$,
$\gamma(0)=x$, $\gamma(1)=y$, which is {\em causal}, in the sense
that its tangent vector at any point stays in the future causal cone
of $g$. In general $J^{+}$ might be non-closed, however, there is
another relation, intimately connected with $J^{+}$, which is always
closed: the Seifert's relation $J^{+}_S$ \cite{seifert71,
minguzzi07}. The Seifert relation turns spacetime into a topological
 space endowed with a closed relation and, provided some topological conditions are satisfied, it
is indeed possible to compactify  spacetime along the lines
suggested in this work.

We do not claim that the compactification constructed in this way,
denoted $\beta(E)$, will be the most physical. Indeed, it will add
many more points than intuitively required. Nevertheless, it will
provide an important step since it will dominate any other possible
compactification which, therefore, will be obtainable from
$\beta(E)$ through a suitable identification of the boundary points.
The  possibility of adding a boundary and extending the preorder so
as to  keep its closure  is not known among physicists. It suffices
to say that the boundary constructions mentioned above, either apply
to very special spacetimes, or do not share this property.

We could also try a different approach by first showing that the
spacetime is not only a topological preordered space, but in fact a
quasi-pseudo-metric space, and then completing it with a preorder
generalization of the Cauchy completion. Unfortunately, although we
could prove, using the results of \cite{minguzzi12b}, that most
interesting spacetimes are quasi-pseudo-metrizable, the completion
would depend on the chosen quasi-pseudo-metric. Therefore, this
strategy is not entirely viable unless we prove the existence of
some natural spacetime quasi-pseudo-metric.

Let us  end this section explaining why we have to generalize
Nachbin's compactification to the preordered case, even in those
cases in which $E$ is ordered. A key example is provided by Misner's
spacetime, a 2-dimensional spacetime which retains several features
of the Taub-NUT spacetime \cite{hawking73}. This spacetime has
topology $S^1\times \mathbb{R}$ and metric $g=2\dd \theta \dd t+t\dd
\theta^2$. The line $t=0$ of topology $S^1$ is a closed lightlike
geodesic. Through any point of the region $t\le 0$ passes a closed
causal curve.

The topological space $E$ given by the region $t\ge 0$ of Misner's
spacetime can be endowed with a preorder given by the causal
relation. This relation is closed, and the subset $t>0$ with the
induced topology and preorder is a completely regularly ordered
space (indeed it can be shown to be convex and it is normally
preordered due to the results of \cite{minguzzi11f}). The set $t=0$
represents a natural connected piece which bounds the region $t>0$,
but Nachbin's compactification cannot dominate a compactification
with this piece of boundary since Nachbin's compactification would
be ordered while the set $t=0$ is a closed null geodesic, and hence
any pair of points in this set violates antisymmetry. In summary,
although the region $t>0$ is ordered, its most natural
compactifications are not ordered. Evidently, Nachbin's
compactification is too restrictive for applications, and the order
condition on the compactified space must be relaxed.

\section{Hausdorff $T_2$-preorder compactifications}

Given two topological preordered spaces $(E_1,\mathscr{T}_1,\le_1)$
and $(E_2,\mathscr{T}_2,\le_2)$ the function $H:E_1\to E_2$ is a
{\em preorder homeomorphism} if $H$ is bijective, continuous and
isotone and so is its inverse. We speak of {\em preorder embedding}
if $H$ is a preorder homeomorphism of $E_1$ on its image
$H(E_1)\subset E_2$, where $H(E_1)$ is given the induced topology
and induced preorder.

We are interested in establishing under which conditions a
topological preordered space $(E,\mathscr{T},\le)$ admits a
 preorder compactification, namely a preorder embedding  $c:
E\to cE$ into a compact topological preordered space  $(cE,
\mathscr{T}_{c},\le_c)$ in such a way that $c(E)$ is a dense subset
of $cE$. We shall often identify $E$ with $c(E)$ because $c$ is a
preorder homeomorphism between $E$ and $c(E)$. We shall be
especially interested in Hausdorff $T_2$-preordered
compactifications, that is, in those preorder compactifications for
which $(cE, \mathscr{T}_{c},\le_c)$ is also a Hausdorff
$T_2$-preordered space. Sometimes we shall write that $(cE,
\mathscr{T}_{c},\le_c)$ is a preorder compactification by meaning
with this that the map $c: E\to cE$ is a preorder compactification.

\begin{definition}
If $c_1E$, $c_2E$, are two preorder compactifications of $E$ we
write $c_1\le c_2$ if there is a continuous isotone map $C :c_2E\to
c_1E$ such that $C\circ c_2=c_1$ ($c_1\le c_2$ reads ``$c_2$ {\em
dominates} over $c_1$''). The map $C$ is just an extension to $c_2E$
of the preorder homeomorphism $c_1\circ c_2^{-1}:c_2(E)\to c_1(E)$.
Two preorder compactifications are {\em equivalent} if $c_1 \le c_2$
and $c_2\le c_1$.
\end{definition}

We remark that two compactifications may be such that $c_1E=c_2E$,
$C=Id$, but correspond to different preorders on $c_1E$. In this
case $c_1\le c_2$ means that, because $Id$ must be isotone,
$G(\le_{c_2}) \subset G(\le_{c_1})$ (in our conventions the set
inclusion is reflexive). Intuitively, to enlarge the
compactification means to enlarge the domain $cE$ or to narrow the
preorder $\le_c$ or both. From the definition it follows that the
relation of domination on the set of all the compactification is a
preorder. The next result establishes that it is actually an order
provided we pass to the quotient made by the classes of
compactifications related by preorder homeomorphisms.

\begin{proposition}
If two Hausdorff preorder compactifications $c_1,c_2,$ are
equivalent, then there is a preorder homeomorphism $H: c_2E\to c_1E$
such that $H\circ c_2=c_1$.
\end{proposition}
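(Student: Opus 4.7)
The plan is to produce the required $H$ as the extension map $C$ provided by one of the two domination relations, and to show its inverse is the extension map coming from the other. Equivalence means we have continuous isotone maps $C_1:c_2E\to c_1E$ with $C_1\circ c_2=c_1$ and $C_2:c_1E\to c_2E$ with $C_2\circ c_1=c_2$. My candidate is $H=C_1$; then $H\circ c_2=c_1$ is immediate, and everything comes down to showing $C_1$ and $C_2$ are mutual inverses.

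First I would chase the identities on the dense subspaces. On $c_2(E)$ we have $C_2\circ C_1\circ c_2=C_2\circ c_1=c_2$, so $C_2\circ C_1$ agrees with the identity $\mathrm{id}_{c_2E}$ on the dense set $c_2(E)\subset c_2E$. Similarly $C_1\circ C_2$ agrees with $\mathrm{id}_{c_1E}$ on the dense set $c_1(E)\subset c_1E$.

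Next I invoke the standard fact that two continuous maps into a Hausdorff space which coincide on a dense subset of their domain coincide everywhere. This is exactly where the Hausdorff hypothesis on $c_1E$ and $c_2E$ is used, and it is really the only non-formal step. Applying it twice gives $C_2\circ C_1=\mathrm{id}_{c_2E}$ and $C_1\circ C_2=\mathrm{id}_{c_1E}$, so $C_1$ is a bijection with continuous inverse $C_2$, hence a homeomorphism. Since both $C_1$ and $C_2$ are isotone by hypothesis, $H:=C_1$ is a preorder homeomorphism satisfying $H\circ c_2=c_1$, as required.

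The main obstacle, if one wants to call it that, is simply recognizing that the Hausdorff hypothesis is doing real work: without it one could not conclude that the two continuous extensions agree globally from their agreement on the dense subspace, and the whole argument would collapse. Everything else is a formal diagram chase from the definition of equivalence.
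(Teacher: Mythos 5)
Your proof is correct and is essentially the same as the paper's: both take $H=C_1$ (the paper's $C_{12}$), verify $C_2\circ C_1$ and $C_1\circ C_2$ agree with the identities on the dense images $c_2(E)$ and $c_1(E)$, and invoke the standard dense-agreement fact for continuous maps into a Hausdorff space to conclude they are mutual inverses, hence isotone homeomorphisms. No differences worth noting.
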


\begin{proof}

Since $c_1 \le c_2$ there is  a continuous isotone map $C_{12}:
c_2E\to c_1E$ such that $C_{12}\circ c_2=c_1$ and since $c_2 \le
c_1$ there is a continuous isotone map $C_{21}: c_1E\to c_2E$ such
that $C_{21}\circ c_1=c_2$. Applying $C_{12}$ to the latter equation
and using the former equation we get $C_{12}\circ C_{21}\circ
c_1=C_{12}\circ c_2=c_1$ which implies that $ C_{12}\circ
C_{21}\vert_{c_1(E)}=Id_{c_1E}\vert_{c_1(E)}$. Since $c_1(E)$  is
dense in $c_1E$ and $c_1E$ is a Hausdorff space we have that
 $C_{12}\circ C_{21}=Id_{c_1E}$ (e.g. \cite[Cor.\ 13.14]{willard70}).
Arguing with the roles of $1$ and $2$ exchanged we get $C_{21}\circ
C_{12}=Id_{c_2E}$ thus $C_{12}$ and $C_{21}$ are one the inverse of
the other. But they are both isotone thus $H:=C_{12}$ is a preorder
homeomorphism.
\end{proof}

\begin{proposition}
If $c_1,c_2$ are two Hausdorff preorder compactifications of $E$ and
$c_1\le c_2$ then the continuous isotone map $C :c_2E\to c_1E$ such
that $C\circ c_2=c_1$ satisfies $C(c_2 E)=c_1 E$, $C(c_2(E))=c_1(E)$
and $C(c_2E\backslash c_2(E))=c_1E\backslash c_1(E)$.
\end{proposition}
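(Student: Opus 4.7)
The three identities should be handled in order of increasing difficulty. First I would dispose of $C(c_2E)=c_1E$: since $c_2E$ is compact and $C$ is continuous, $C(c_2E)$ is compact in the Hausdorff space $c_1E$, hence closed. On the other hand $C(c_2E)\supseteq C(c_2(E))=c_1(E)$ by the identity $C\circ c_2=c_1$, and $c_1(E)$ is dense in $c_1E$; a closed set containing a dense set is the whole space, so $C(c_2E)=c_1E$.

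The identity $C(c_2(E))=c_1(E)$ is immediate from $C\circ c_2=c_1$ read both as an inclusion ($\subseteq$) and as a surjection of $E$ onto $c_1(E)$ ($\supseteq$); no density or Hausdorffness is needed here.

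The main point is the third identity, which is equivalent to the claim $C^{-1}(c_1(E))=c_2(E)$. One inclusion is given by the second identity; for the converse, suppose for contradiction that there is $y\in c_2E\setminus c_2(E)$ with $C(y)=c_1(x)$ for some $x\in E$. By density of $c_2(E)$ in $c_2E$ pick a net $\{c_2(x_\alpha)\}$ converging to $y$. Applying $C$ and using continuity together with $C\circ c_2=c_1$ gives $c_1(x_\alpha)\to c_1(x)$ in $c_1E$; since $c_1(E)$ carries the subspace topology and $c_1:E\to c_1(E)$ is a preorder homeomorphism, this implies $x_\alpha\to x$ in $E$, hence $c_2(x_\alpha)\to c_2(x)$ in $c_2E$. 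Hausdorffness of $c_2E$ forces $y=c_2(x)$, contradicting $y\notin c_2(E)$. Thus $C^{-1}(c_1(E))=c_2(E)$, and taking complements in $c_2E$ together with surjectivity of $C$ yields $C(c_2E\setminus c_2(E))=c_1E\setminus c_1(E)$.

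The only delicate point is the net argument in the last paragraph: it crucially uses that both $c_1$ and $c_2$ are preorder \emph{embeddings} (so that convergence in $c_i(E)$ pulls back to convergence in $E$) and that the target compactification is Hausdorff (to identify the two limits of the image net). Everything else is a routine combination of compactness, continuity, and density, so I expect no further obstacle.
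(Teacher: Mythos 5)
Your proof is correct. For the first two identities you argue exactly as the paper does: $C(c_2E)$ is compact, hence closed in the Hausdorff space $c_1E$, and contains the dense set $C(c_2(E))=c_1(E)$, which is itself immediate from $C\circ c_2=c_1$. The difference lies in the third identity: the paper does not prove it at all but simply invokes the standard theorem on compactifications (Engelking, Theorem 3.5.7) asserting that the induced map carries remainder onto remainder, whereas you re-prove that fact directly. Your net argument is sound: for $y\in c_2E\setminus c_2(E)$ with $C(y)=c_1(x)$, a net $c_2(x_\alpha)\to y$ pushes forward to $c_1(x_\alpha)\to c_1(x)$, and since $c_1$ is an embedding and the limit lies in $c_1(E)$, one gets $x_\alpha\to x$ in $E$, hence $c_2(x_\alpha)\to c_2(x)$, and uniqueness of limits in the Hausdorff space $c_2E$ forces $y=c_2(x)$, a contradiction; together with surjectivity this gives $C^{-1}(c_1(E))=c_2(E)$ and hence the remainder identity. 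What your route buys is self-containedness (and in fact the slightly stronger conclusion that $C^{-1}(c_1(x))=\{c_2(x)\}$ for $x\in E$, i.e.\ the fibres over $c_1(E)$ are singletons), at the cost of redoing a textbook result; the paper's route is shorter because it delegates precisely this step to the general theory, which applies since preorder compactifications are in particular compactifications in the usual sense.
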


\begin{proof}
The map $C$ is necessarily onto because $C(c_2E)$ is compact and
hence closed and the image of $C$ includes $C(c_2(E))=c_1(E)$ which
is dense in $c_1 E$. The preorder compactifications are
compactifications so that  the last equation follows from
\cite[Theor.\ 3.5.7]{engelking89}.
\end{proof}

Let $f: E \to [0,1]$ be a continuous function on a topological space
$(E,\mathscr{T})$, we shall denote by $\le_f$ the total preorder
given by ``$x\le_f y$ if $f(x)\le f(y)$''. Its graph will be denoted
with $G_f$.

The next proposition establishes some necessary conditions for the
existence of a Hausdorff $T_2$-preorder compactification.

\begin{proposition} \label{bos}
If $(E,\mathscr{T},\le)$ is a subspace of a Hausdorff
$\,T_2$-preordered compact space, then $E$ is a $T_2$-preordered
Tychonoff space and the family of continuous isotone functions
$\mathcal{F}$, $f:E\to [0,1]$, is such that  $x\le y$ if and only if
for every $f\in \mathcal{F}$, $f(x)\le f(y)$ (equivalently
$G(\le)=\bigcap_{f\in \mathcal{F}} G_f$).
\end{proposition}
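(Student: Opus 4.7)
The plan is to pull back structure from the ambient compactum, treating each assertion separately. Write $(K,\mathscr{T}_K,\le_K)$ for the Hausdorff $T_2$-preordered compact space of which $E$ is a subspace, and $\le$ for the restriction of $\le_K$ to $E$.

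First I would verify the two topological/preorder regularity statements, which are essentially hereditary. Since the graph $G(\le_K)$ is closed in $K\times K$ and $E\times E$ carries the subspace topology inherited from $K\times K$, the identity
\[
G(\le)=G(\le_K)\cap(E\times E)
\]
shows that $G(\le)$ is closed in $E\times E$, so $E$ is $T_2$-preordered. For the Tychonoff property, note that $K$ is compact Hausdorff, hence normal and in particular Tychonoff; and the Tychonoff property is hereditary, so $E$ inherits it from $K$.

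The substantive step is the separation claim. One direction is immediate from the definition of isotone: if $x\le y$ then $f(x)\le f(y)$ for every isotone $f$. For the converse I would show the contrapositive: if $x,y\in E$ with $x\not\le y$, I want to produce a continuous isotone $f\colon E\to[0,1]$ with $f(x)>f(y)$. The key input is Nachbin's theorem that a compact Hausdorff $T_2$-preordered space is normally preordered, and in particular satisfies an ordered Urysohn lemma: disjoint closed sets, one increasing and one decreasing, can be separated by a continuous isotone function into $[0,1]$. Consider in $K$ the increasing hull $i_K(x)$ and the decreasing hull $d_K(y)$; both are closed since $K$ is $T_1$-preordered. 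They are also disjoint: any $z\in i_K(x)\cap d_K(y)$ would give $x\le_K z\le_K y$, forcing $x\le_K y$ by transitivity and hence $x\le y$, contrary to assumption. Applying the Urysohn-type lemma yields a continuous isotone $F\colon K\to[0,1]$ with $F\equiv 1$ on $i_K(x)$ and $F\equiv 0$ on $d_K(y)$, so $F(x)=1$ and $F(y)=0$. Then $f:=F|_E$ is continuous and isotone on $E$ and satisfies $f(x)>f(y)$, as required.

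The main obstacle is the reliance on Nachbin's ordered Urysohn lemma for compact Hausdorff $T_2$-preordered spaces; everything else is formal verification. Once that tool is invoked, the restriction to $E$ is automatic because $\le$ is nothing but $\le_K$ cut down to $E\times E$, so isotonicity and continuity pass to the subspace without effort, and the equivalent formulation $G(\le)=\bigcap_{f\in\mathcal{F}}G_f$ is just a rewriting of the pointwise characterization.
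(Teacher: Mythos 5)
Your proof is correct and follows essentially the same route as the paper: hereditarity of the Tychonoff and $T_2$-preorder properties, plus the fact that the compact ambient space is normally preordered, so its preorder is represented by continuous isotone functions whose restrictions to $E$ do the job. The only difference is that you unpack the cited representation result (the paper's reference to \cite[Prop.\ 1.1]{minguzzi11c}) into an explicit Urysohn-type separation of the closed hulls $i_K(x)$ and $d_K(y)$, which is a correct and self-contained way to obtain the same ingredient.
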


\begin{proof}
Let $E$ be a subspace of a Hausdorff $T_2$-preordered compact space
which we denote $(E', \mathscr{T}',\le')$. Since every compact
Hausdorff space is Tychonoff and this property is hereditary, we
have that $E$ is Tychonoff. The $T_2$-preorder space property is
also hereditary thus $E$ is $T_2$-preordered. Finally, since every
$T_2$-preordered compact space is normally preordered
\cite{minguzzi11f},  for $x',y'\in E'$, $x'\le y'$ iff $F(x')\le
F(y')$ where $F:E'\to [0,1]$ is any continuous and isotone function
on $E'$ (see e.g.\ \cite[Prop.\ 1.1]{minguzzi11c}). Let
$\mathcal{G}$ be the family of continuous isotone functions, $f:E\to
[0,1]$, which come from the restriction of some continuous isotone
function $F:E'\to [0,1]$. Evidently, for $x,y\in E$, $x\le y$ iff
for every $f\in \mathcal{G}$, $f(x)\le f(y)$. Since $\mathcal{F}$
includes $\mathcal{G}$ and is made of isotone functions the last
claim follows.
\end{proof}

\subsection{The largest Hausdorff $T_2$-preorder compactification}

The next result establishes that the previous necessary conditions
are actually sufficient and that there is  a Hausdorff
$T_2$-preordered compactification which dominates over all the other
Hausdorff $T_2$-preordered compactifications. The locally compact
$\sigma$-compact Hausdorff $T_2$-preordered spaces satisfy these
necessary and sufficient conditions \cite{minguzzi11f}.

\begin{theorem} \label{mbr}
Let $(E,\mathscr{T},\le)$ be a $T_2$-preordered Tychonoff space, let
$\mathcal{F}$ be the family of continuous isotone functions $f:E\to
[0,1]$, and assume that the preorder is represented by the
continuous isotone functions i.e. $G(\le)=\bigcap_{f\in \mathcal{F}}
G_f$. Let $\beta: E\to \beta E$ be the Stone-\v Cech
compactification and let $\tilde{\mathcal{F}}$ be the set of
continuous functions over $\beta E$ obtained from the (unique)
extension\footnote{Note that the extension $\tilde{F}$ is really the
extension of $f\circ \beta^{-1}$.} of the elements of $\mathcal{F}$.
There is a largest Hausdorff $T_2$-preordered compactification of
$(E,\mathscr{T},\le)$ given by $(\beta E, \mathscr{T}_\beta,
\le_\beta)$ where $G(\le_\beta)=\bigcap_{\tilde{f}\in
\tilde{\mathcal{F}}} G_{\tilde{f}}$. Every continuous isotone
function on $E$ extends to a continuous isotone function on $\beta
E$.
\end{theorem}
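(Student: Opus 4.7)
The plan is to first verify that $(\beta E, \mathscr{T}_\beta, \le_\beta)$ is a Hausdorff $T_2$-preordered compact space into which $E$ preorder-embeds via $\beta$, and then to exploit the Stone-\v{C}ech universal property to show that this compactification dominates every other Hausdorff $T_2$-preordered compactification of $E$. For the first task, I would check that $\le_\beta$ is a preorder with closed graph: reflexivity and transitivity of each $G_{\tilde f}$ are inherited by the intersection, and closedness of each $G_{\tilde f}$ (from continuity of $\tilde f$) makes $G(\le_\beta)$ closed as an intersection of closed sets, so combined with compact Hausdorffness of $\beta E$ we obtain a Hausdorff $T_2$-preordered compact space. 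To see that $\beta$ is a preorder embedding, I would use that $\beta$ is already a homeomorphism onto its dense image and unwind the definition: $\beta(x) \le_\beta \beta(y)$ iff $\tilde f(\beta(x)) \le \tilde f(\beta(y))$ for all $\tilde f \in \tilde{\mathcal{F}}$, i.e.\ $f(x) \le f(y)$ for all $f \in \mathcal{F}$, which by the hypothesis $G(\le) = \bigcap_{f \in \mathcal{F}} G_f$ is equivalent to $x \le y$.

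The main step is the domination property. Given a Hausdorff $T_2$-preordered compactification $c: E \to cE$, the universal property of $\beta E$ produces a unique continuous $C: \beta E \to cE$ with $C \circ \beta = c$, and I need to verify $C$ is isotone. Here the key input is the fact used in the proof of Proposition \ref{bos}: a compact Hausdorff $T_2$-preordered space is normally preordered, so its preorder is characterized by continuous isotone $[0,1]$-valued functions, meaning $u \le_c v$ iff $F(u) \le F(v)$ for every continuous isotone $F: cE \to [0,1]$. Fix such an $F$ and set $f := F \circ c \in \mathcal{F}$; by uniqueness of continuous $[0,1]$-valued Stone-\v{C}ech extensions, $\tilde f$ and $F \circ C$ must coincide, since both continuously extend $f \circ \beta^{-1}$ from $\beta(E)$ to $\beta E$. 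Thus $p \le_\beta q$ in $\beta E$ gives $F(C(p)) = \tilde f(p) \le \tilde f(q) = F(C(q))$, and since $F$ was arbitrary, $C(p) \le_c C(q)$, proving $C$ isotone and hence $c \le \beta$.

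The final sentence of the theorem is then a bookkeeping consequence: for any continuous isotone $f: E \to [0,1]$, the Stone-\v{C}ech extension $\tilde f$ is continuous on $\beta E$ and satisfies $G(\le_\beta) \subseteq G_{\tilde f}$ by the very definition of $\le_\beta$, so $\tilde f$ is isotone, and bounded continuous isotone functions reduce to this case by affine rescaling. The principal obstacle in the argument is the isotonicity of $C$, which genuinely requires pairing Stone-\v{C}ech uniqueness with the normal-preorderedness theorem for compact $T_2$-preordered spaces; without the latter one could not reduce $\le_c$ to a test against continuous isotone $[0,1]$-valued functions on $cE$.
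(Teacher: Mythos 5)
Your proposal is correct and follows essentially the same route as the paper: define $\le_\beta$ through the extended family $\tilde{\mathcal{F}}$, check that it is a closed preorder inducing $\le$ on $E$, obtain $C:\beta E\to cE$ from Stone-\v Cech universality, and deduce isotonicity of $C$ from the fact that a compact $T_2$-preordered space is normally preordered and hence has its preorder represented by continuous isotone $[0,1]$-valued functions. The only (harmless) difference lies in how that representation is deployed: you apply it to $\le_c$ on $cE$ and identify $F\circ C$ with the extension $\widetilde{F\circ c}$ via density of $\beta(E)$, whereas the paper pulls $\le_c$ back to the closed preorder $R=(C\times C)^{-1}G(\le_c)$ on $\beta E$, represents $R$ by its continuous isotone functions there, and observes these lie in $\tilde{\mathcal{F}}$, so that $G(\le_\beta)\subset R$.
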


\begin{proof}

Each graph $G_{\tilde{f}}$ is closed because the functions
$\tilde{f}:\beta E \to[0,1]$ are continuous, thus $G(\le_\beta)$
being the intersection of closed sets is closed. Further the graphs
$G_{\tilde{f}}$ contain the diagonal of $\beta E$, thus
$G(\le_\beta)$ contains the diagonal. Moreover, $\le_{\tilde{f}}$ is
transitive which implies that $\le_\beta$ is transitive and hence a
closed preorder on $\beta E$. For every $f\in \mathcal{F}$, if
$x,y\in E$ then $f(x)\le f(y)$ iff $\tilde{f}(x)\le \tilde{f}(y)$
thus $G(\le)=G(\le_\beta) \cap (E\times E)$ which proves that
$(\beta E, \mathscr{T}_\beta, \le_\beta)$ is a preorder
compactification.

If $f: E\to[0,1]$ is a continuous isotone function on $E$ then its
continuous extension to $\beta E$, $\tilde{f}$, is such that
$\tilde{f}\in \tilde{F}$ and by definition of $\le_\beta$,
$G(\le_\beta)\subset G_{\tilde{f}}$ which means that $\tilde{f}$ is
isotone.

Let $(cE, \mathscr{T}_c, \le_c)$ be another preorder
compactification then, since $(\beta E, \mathscr{T}_\beta)$ is the
largest Hausdorff compactification \cite[Theor.\ 19.9]{willard70}
there is a continuous map $H: \beta E\to cE$ such that $H\circ \beta
=c$. The relation on $\beta E$,  $R:=(H\times H)^{-1}G(\le_c)$ which
is clearly reflexive and transitive is also closed in $\beta E\times
\beta E$ because $H$ is continuous.

The map $H$ extends into a continuous function on $\beta E$ the
preorder homeomorphism $c\circ \beta^{-1}: \beta(E)\to c(E)$ thus
$R\cap (\beta(E)\times \beta(E))=G(\le_\beta) \cap (\beta(E)\times
\beta(E))$, that is, $(\beta \times \beta)^{-1}R=G(\le)$. If a
function $g: \beta E\to [0,1]$ is continuous and  $R$-isotone then
$g\circ \beta: E\to [0,1]$ is continuous and isotone which means
that $g\in \tilde{\mathcal{F}}$ (the extension of a continuous
function to a continuous function on $\beta E$ is unique because
$\beta(E)$ is dense in $\beta E$), that is $g$ is also
$G_\beta$-isotone.

Since $(\beta E, \mathscr{T}_\beta, R)$ is a compact
$T_2$-preordered space it is normally preordered \cite[Theor.\
2.4]{minguzzi11f} thus $R=\bigcap_{g\in \mathcal{G}} G_g$ where the
intersection is with respect to the family $\mathcal{G}$ of all the
continuous $R$-isotone functions on $\beta E$. As we have just
proved, this family is a subset of $\tilde{\mathcal{F}}$ thus
$G(\le_\beta)\subset R$. Since $G(\le_\beta) \subset (H\times
H)^{-1}G(\le_c)$ we conclude that $H$ is isotone and hence that
$c\le \beta$.
\end{proof}

\begin{theorem} \label{gha}
A  Hausdorff $T_2$-preorder compactification $(c E,
\mathscr{T}_c,\le_c)$ which shares the properties
\begin{itemize}
\item[(a)] every continuous function $f:E\to [0,1]$ can be extended to a
continuous function on $cE$,
\item[(b)] every continuous isotone function $f:E\to [0,1]$  can be extended to a
continuous isotone function on $cE$,
\end{itemize}
is necessarily equivalent to $(\beta E,
\mathscr{T}_\beta,\le_\beta)$.
\end{theorem}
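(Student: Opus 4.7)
The plan is to show that the canonical map $H: \beta E \to cE$ arising from the Stone-\v Cech universal property is a preorder homeomorphism between $(\beta E, \mathscr{T}_\beta, \le_\beta)$ and $(cE, \mathscr{T}_c, \le_c)$. First, property (a) together with the universal property of the Stone-\v Cech compactification (\cite[Theor.\ 19.12]{willard70} or similar) tells us that $cE$ is equivalent to $\beta E$ as a purely topological compactification. Concretely, since every $f\in C(E,[0,1])$ extends continuously to $cE$, there is a continuous map $H: \beta E \to cE$ with $H\circ \beta = c$, and by the analogous property of $cE$ (which is what (a) guarantees) together with density and Hausdorffness, $H$ is a homeomorphism.

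Next I would check that $H$ is isotone. Theorem~\ref{mbr} already asserts $c\le \beta$, so there is a continuous isotone map $C: \beta E \to cE$ with $C\circ \beta = c$. By uniqueness of continuous extensions from the dense set $\beta(E)$ into the Hausdorff space $cE$, one has $C=H$, hence $H$ itself is isotone.

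The main work is to use property (b) to prove that $H^{-1}$ is also isotone, which would give the equivalence. Take any $f\in \mathcal{F}$; denote by $\tilde f:\beta E\to [0,1]$ the continuous (isotone, by Theorem~\ref{mbr}) extension and by $\hat f: cE\to [0,1]$ the continuous isotone extension granted by (b). Both $\tilde f$ and $\hat f\circ H$ are continuous maps $\beta E\to [0,1]$ that agree with $f$ on $\beta(E)$, so by uniqueness of Stone-\v Cech extensions,
\[
\tilde f = \hat f \circ H.
\]
Now if $p\le_c q$ in $cE$, then for every $f\in\mathcal{F}$,
\[
\tilde f(H^{-1}(p)) = \hat f(p) \le \hat f(q) = \tilde f(H^{-1}(q)),
\]
using that $\hat f$ is $\le_c$-isotone. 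Because $G(\le_\beta)=\bigcap_{\tilde f\in\tilde{\mathcal{F}}} G_{\tilde f}$, this yields $H^{-1}(p)\le_\beta H^{-1}(q)$, so $H^{-1}$ is isotone.

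The expected obstacle is this last step: one must exploit the precise description of $\le_\beta$ as the intersection over $\tilde{\mathcal{F}}$ from Theorem~\ref{mbr}, and the identification $\tilde f = \hat f\circ H$ provided by uniqueness of Stone-\v Cech extensions, to transport the isotonicity of each $\hat f$ back to $\beta E$. Once $H$ and $H^{-1}$ are both continuous and isotone, $H$ is a preorder homeomorphism intertwining $c$ and $\beta$, so the two compactifications are equivalent.
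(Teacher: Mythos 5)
Your proposal is correct, but it takes a genuinely different route from the paper's proof. The paper works on $cE$: using (a) it obtains a continuous map $D:cE\to\beta E$ with $D\circ c=\beta$, pulls back the preorder to the closed preorder $R=(D\times D)^{-1}G(\le_\beta)$ on $cE$, and then invokes the fact that compact $T_2$-preordered spaces are normally preordered \cite[Theor.\ 2.4]{minguzzi11f} twice — once to represent $R$ and once to represent $\le_c$ by their continuous isotone functions — so that property (b) yields $G(\le_c)\subset R$, i.e.\ $D$ is isotone and $\beta\le c$. You instead use the full strength of (a) to get a homeomorphism $H:\beta E\to cE$ with $H\circ\beta=c$, identify $H$ with the isotone map given by $c\le\beta$ via uniqueness of extensions from the dense set $\beta(E)$, and then prove $\beta\le c$ directly: for each $f\in\mathcal{F}$ the identity $\tilde f=\hat f\circ H$ (again by density and Hausdorffness) transports the $\le_c$-isotonicity of the extension $\hat f$ supplied by (b) into the defining description $G(\le_\beta)=\bigcap_{\tilde f\in\tilde{\mathcal{F}}}G_{\tilde f}$, so $H^{-1}$ is isotone. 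This is a sound argument (only the surjectivity of $f\mapsto\tilde f$ onto $\tilde{\mathcal{F}}$ is needed to close the last step, and that holds by definition), and it is in some ways cleaner: it avoids introducing the auxiliary relation $R$ and avoids a second appeal to the normal preorderability theorem, at the mild cost of using that (a) forces $cE$ to be homeomorphic to $\beta E$ rather than merely dominated by it; the normally preordered machinery still enters implicitly through your citation of Theorem \ref{mbr} for $c\le\beta$ and for the isotonicity of the extensions $\tilde f$.
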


\begin{proof}
We already know that $c\le \beta$ because $\beta E$ is the largest
Hausdorff $T_2$-preorder compactification. Since the
compactification $(c E, \mathscr{T}_c)$ shares property (a) it is
equivalent with the Stone-\v Cech compactification $(\beta
E,\mathscr{T}_\beta)$, in particular there is a continuous map $D: c
E \to \beta E$ such that $D\circ c=\beta$. The relation on $c E$,
$R:=(D\times D)^{-1}G(\le_{\beta})$ which is clearly reflexive and
transitive is also closed in $c E\times c E$ because $D$ is
continuous.

$D$ extends into a continuous function on $c E$ the preorder
homeomorphism $\beta\circ c^{-1}: c(E)\to \beta(E)$ thus $R\cap
(c(E)\times c(E))=G(\le_c) \cap (c(E)\times c(E))$, that is, $(c
\times c)^{-1}R=G(\le)$. If a function $g: c E\to [0,1]$ is
continuous and  $R$-isotone then $g\circ c: E\to [0,1]$ is
continuous and isotone which means by property (b) that $g$ is also
$G_c$-isotone (the extension of a continuous function to a
continuous function on $c E$ is unique because $c(E)$ is dense in $c
E$).

Since $(c E, \mathscr{T}_c, R)$ is a compact $T_2$-preordered space
it is normally preordered \cite[Theor.\ 2.4]{minguzzi11f} thus
$R=\bigcap_{g\in \mathcal{G}} G_g$ where the intersection is with
respect to the family $\mathcal{G}$ of all the continuous
$R$-isotone functions on $cE$. As we have just proved, this family
is contained in the family of continuous $G_c$-isotone functions
$\mathcal{C}$, $\bigcap_{g\in\mathcal{C}} G_g \subset R$. Finally,
note that $(c E, \mathscr{T}_c,\le_c)$ is also a compact
$T_2$-preordered space hence  normally preordered and hence with a
preorder represented by the continuous $G_c$-isotone functions,
$G(\le_c)= \bigcap_{g\in\mathcal{C}} G_g $, which implies
$G(\le_c)\subset R$. The inclusion $G(\le_c) \subset (D\times
D)^{-1}G(\le_\beta)$ proves that $D$ is isotone  and hence that
$\beta\le c$.
\end{proof}

Adapting the terminology of Fletcher and Lindgren \cite{fletcher82}
for ordered compactifications we can say that the next result proves
that $(\beta E,\mathscr{T}_\beta,\le_\beta)$ is a {\em strict}
preorder compactification.

\begin{theorem} \label{bnw}
On $(\beta E,\mathscr{T}_\beta)$ the closed preorder $\le_\beta$ is
the smallest closed preorder inducing $\le$ on $E$.
\end{theorem}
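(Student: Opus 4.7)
My plan is to reduce the statement to Theorem \ref{mbr} by observing that any candidate competing preorder produces a compactification on the same underlying space $\beta E$, and then using density plus Hausdorffness to force the comparison map to be the identity.

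More precisely, let $\le'$ be any closed preorder on $\beta E$ whose restriction to $\beta(E)\times\beta(E)$ coincides with $G(\le)$ (identifying $E$ with $\beta(E)$). First I would check that the triple $(\beta E,\mathscr{T}_\beta,\le')$ is itself a Hausdorff $T_2$-preorder compactification of $(E,\mathscr{T},\le)$: the topological space $\beta E$ is compact Hausdorff; $\le'$ is closed by assumption; the canonical map $\beta\colon E\to\beta E$ is a preorder embedding because, by hypothesis, $G(\le')\cap(\beta(E)\times\beta(E))=G(\le)$; and $\beta(E)$ is dense in $\beta E$ by the standard Stone--\v Cech property.

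Now I would invoke Theorem \ref{mbr}, which says that $(\beta E,\mathscr{T}_\beta,\le_\beta)$ dominates every Hausdorff $T_2$-preorder compactification of $E$. Applied to the compactification just produced, this yields a continuous isotone map $C\colon(\beta E,\mathscr{T}_\beta,\le_\beta)\to(\beta E,\mathscr{T}_\beta,\le')$ satisfying $C\circ\beta=\beta$, i.e.\ $C$ restricts to the identity on the dense subset $\beta(E)$. Since $(\beta E,\mathscr{T}_\beta)$ is Hausdorff, the standard uniqueness of continuous extensions from dense subsets (as already cited in the proof of the first proposition via \cite[Cor.\ 13.14]{willard70}) forces $C=\mathrm{Id}_{\beta E}$. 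Isotonicity of $C$ then reads: $x\le_\beta y\Rightarrow x\le' y$, which is exactly $G(\le_\beta)\subset G(\le')$.

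I do not foresee a real obstacle: the entire argument is packaged inside Theorem \ref{mbr}, and the only point that needs a moment of care is the bookkeeping of what "induces $\le$ on $E$" means as a statement about graphs, so that one may legitimately regard $(\beta E,\mathscr{T}_\beta,\le')$ as a compactification of $(E,\mathscr{T},\le)$ in the sense of the definition given earlier in this section. Once that is laid out, dominance by $\le_\beta$ together with density and Hausdorffness does the rest.
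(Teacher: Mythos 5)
Your proposal is correct and follows essentially the same route as the paper: regard $(\beta E,\mathscr{T}_\beta,\le')$ as a preorder compactification via the same map $\beta$, invoke the maximality of $\le_\beta$ from Theorem \ref{mbr} to get a continuous isotone comparison map, and use density of $\beta(E)$ plus Hausdorffness to identify it with the identity, so that isotonicity yields $G(\le_\beta)\subset G(\le')$. No substantive difference from the paper's argument.
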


\begin{proof}
Let $\le_R$ be another closed preorder such that $R\cap (E\times
E)=G(\le)$. The map $\beta': E \to \beta E$, $\beta'=\beta$, where
$\beta E$ is regarded as the preordered space $(\beta
E,\mathscr{T}_\beta,R)$ is a preorder compactification. Since
$\beta$ is the largest $\beta'\le \beta$, which means that there is
a continuous isotone function $B: \beta E\to \beta' E$ such that
$B\circ \beta=\beta'$. On $\beta(E)$ the map $B$ coincides with
$\beta'\circ\beta^{-1}=\beta\circ\beta^{-1}=Id$, thus $B$ is the
identity over $\beta E$. The fact that it is isotone means
$G(\le_\beta) \subset R$ which is the thesis.
\end{proof}

\begin{theorem}
If $(E,\mathscr{T},\le)$ is a compact Hausdorff $T_2$-preordered
space, then its Hausdorff $T_2$-preorder compactification $\beta: E
\to \beta E$ constructed in Theorem \ref{mbr} is equivalent with the
identity $Id: E \to E$.

\end{theorem}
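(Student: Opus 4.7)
The plan is to deduce this from Theorem \ref{gha} applied to the identity map.

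First I would check that the hypotheses of Theorem \ref{mbr} are satisfied, so that the statement of the theorem makes sense. Since $(E,\mathscr{T})$ is compact Hausdorff it is Tychonoff. Moreover, viewing $E$ as a subspace of itself, Proposition \ref{bos} guarantees that the family $\mathcal{F}$ of continuous isotone functions $f:E\to[0,1]$ represents $\le$, that is, $G(\le)=\bigcap_{f\in\mathcal{F}} G_f$. (This is essentially the content of the normal preorder property of compact $T_2$-preordered spaces cited in the preceding proofs.) So $\beta:E\to\beta E$ is indeed well defined by Theorem \ref{mbr}.

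Next I would observe that the identity map $Id:E\to E$ is itself a Hausdorff $T_2$-preorder compactification of $E$: the target $(E,\mathscr{T},\le)$ is compact, Hausdorff and $T_2$-preordered by hypothesis, $Id$ is trivially a preorder embedding, and its image is the whole space, hence dense.

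The crux is then to verify that this compactification satisfies properties (a) and (b) of Theorem \ref{gha}. But here both are immediate: any continuous function $f:E\to[0,1]$ extends to a continuous function on $cE=E$, namely $f$ itself; and any continuous isotone function extends to a continuous isotone function, again by taking the same $f$. Applying Theorem \ref{gha} with $c=Id$ yields that $Id$ is equivalent to $\beta$, which is the claim.

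I do not anticipate a real obstacle: the only non-trivial point is noticing that Proposition \ref{bos} plus the normal preorder property of compact $T_2$-preordered spaces deliver the representation hypothesis that underlies the construction of $\beta E$, after which Theorem \ref{gha} closes the argument with essentially no additional work.
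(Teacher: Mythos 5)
Your proposal is correct and follows essentially the same route as the paper: the identity is a Hausdorff $T_2$-preorder compactification trivially satisfying conditions (a) and (b) of Theorem \ref{gha}, hence equivalent to $\beta$. Your preliminary check via Proposition \ref{bos} that the hypotheses of Theorem \ref{mbr} hold is a reasonable (implicitly assumed) addition, but otherwise the argument is the paper's own.
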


\begin{proof}
The map $c:E\to E$ where $c=Id_E$ and $(c E, \mathscr{T}_c,\le_c)=(
E, \mathscr{T},\le)$ is a preorder compactification which satisfies
both conditions (a) and (b) of Theorem \ref{gha}, thus the preorder
compactification $Id$ is equivalent to $\beta$.
\end{proof}

The discrete preorder is that preorder for which the increasing hull
of a point is made only by the point (thus it is actually an order).
The indiscrete preorder is that preorder for which the increasing
hull of a point is the whole space. The indiscrete preorder is
closed while the discrete preorder requires the Hausdorffness of the
space, which we assume.

\begin{corollary}
If $\le$ is the discrete (indiscrete) preorder then $(\beta E,
\mathscr{T}_\beta,\le_\beta)$ is the Stone-\v Cech compactification
endowed with the discrete (resp. indiscrete) preorder.
\end{corollary}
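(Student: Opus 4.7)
The plan is to invoke Theorem \ref{mbr} in each case, which reduces the problem to identifying the family $\mathcal{F}$ of continuous isotone functions $f: E\to[0,1]$ and then tracking its Stone-\v Cech extensions $\tilde{\mathcal{F}}$ on $\beta E$. In both cases $E$ is $T_2$-preordered (the diagonal is closed because $E$ is assumed Hausdorff, and $E\times E$ is trivially closed) and Tychonoff (implicit once $\beta E$ is invoked), so it only remains to check that the preorder is represented by continuous isotone functions and then compute $\bigcap_{\tilde f\in\tilde{\mathcal{F}}} G_{\tilde f}$.

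For the discrete preorder the implication $x\le y\Rightarrow f(x)\le f(y)$ is vacuous unless $x=y$, so every continuous function $E\to[0,1]$ is isotone and $\mathcal{F}$ exhausts all continuous $[0,1]$-valued functions on $E$. Tychonoffness of $E$ then yields $\bigcap_{f\in\mathcal{F}} G_f=G(\le)$, so the hypothesis of Theorem \ref{mbr} is met. By the universal property of the Stone-\v Cech compactification, the extensions in $\tilde{\mathcal{F}}$ exhaust all continuous $[0,1]$-valued functions on $\beta E$; since $\beta E$ is compact Hausdorff hence Tychonoff, these functions separate points of $\beta E$. Consequently $G(\le_\beta)=\bigcap_{\tilde f\in\tilde{\mathcal{F}}} G_{\tilde f}$ coincides with the diagonal of $\beta E$, i.e.\ $\le_\beta$ is the discrete preorder.

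For the indiscrete preorder any isotone $f: E\to[0,1]$ must satisfy both $f(x)\le f(y)$ and $f(y)\le f(x)$ for all $x,y\in E$, forcing $f$ to be constant. Hence $\mathcal{F}$ consists only of constants with $G_f=E\times E$, the hypothesis of Theorem \ref{mbr} is immediate, and each continuous extension $\tilde f$ is the same constant function on $\beta E$ with $G_{\tilde f}=\beta E\times\beta E$. Therefore $G(\le_\beta)=\beta E\times\beta E$, the indiscrete preorder. I foresee no real obstacle; the only mildly delicate step is invoking the universal property of $\beta E$ together with the point-separation property on the Hausdorff space $\beta E$ in the discrete case.
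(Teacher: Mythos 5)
Your argument is correct, but it proceeds differently from the paper. You work directly with the defining formula of Theorem \ref{mbr}: in each case you identify $\mathcal{F}$ explicitly (all continuous $[0,1]$-valued functions in the discrete case, only the constants in the indiscrete case), check the representation hypothesis, and compute $\bigcap_{\tilde f\in\tilde{\mathcal{F}}}G_{\tilde f}$ on $\beta E$, using that in the discrete case $\tilde{\mathcal{F}}$ is all of the continuous $[0,1]$-valued functions on the compact Hausdorff space $\beta E$ and these completely separate points (for a given ordered pair $x\neq y$ one needs a function with $\tilde f(x)>\tilde f(y)$, not merely $\tilde f(x)\neq\tilde f(y)$; this is immediate since the family is closed under $1-\tilde f$, but it is worth saying). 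The paper instead argues structurally: the discrete case is a one-line consequence of Theorem \ref{bnw}, since the diagonal of $\beta E$ is a closed preorder inducing $\le$ and every preorder contains the diagonal, so minimality forces $\le_\beta$ to be the diagonal; the indiscrete case uses only density of $\beta(E)$ and closedness of $G(\le_\beta)$ (points of $\beta E\times\beta E$ are approximated by pairs from $\beta(E)\times\beta(E)$, all of which are related), with no mention of functions. Your route has the merit of being uniform and self-contained given Theorem \ref{mbr}, and it makes the extended function family explicit; the paper's route is shorter, and its indiscrete argument in particular shows the stronger fact that any closed preorder on a Hausdorff compactification restricting to an indiscrete preorder on a dense subspace must itself be indiscrete.
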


\begin{proof}
The discrete preorder $\le_d$ on $\beta E$ is clearly the smallest
closed preorder inducing the discrete preorder $\le$, thus
$\le_d=\le_\beta$.

For the indiscrete case let $x,y\in \beta E$ and let $O_x$, $O_y$ be
neighborhoods of $x$ and $y$ respectively. Since $\beta(E)$ is dense
 there are points $x',y'\in E$ such that $x'\in\beta(E)\cap
O_x$, $y'\in\beta(E)\cap O_y$, from  $\beta^{-1}(x')\le
\beta^{-1}(y')$ since $\beta$ is isotone we get $x' \le_\beta y'$
and since $\le_\beta$ is closed we conclude $x \le y$.
\end{proof}

\subsection{The relation with Nachbin's $T_2$-order
compactification}

In this section we wish to study the relation between the
compactification $\beta: E\to \beta E$ and the Nachbin's
compactification $n: E\to nE$ in those cases in which  $E$ is a
completely regularly ordered space so that the latter
compactification applies. In this case, although $\le$ is an order,
$\le_\beta$ need not be an order. We want to prove that the
Nachbin's compactification is obtained  by taking the quotient with
respect to $\sim_\beta$.

Let $(E/\!\!\sim,\mathscr{T}/\!\!\sim,\lesssim)$ be the quotient
topological preordered space and let  $\pi: E\to E/\!\!\sim$ be the
continuous quotient projection. Every open (closed) increasing
(decreasing) set on $E$ projects to an open (resp.\ closed)
increasing (resp.\ decreasing) set on $E/\!\!\sim$ and all the
latter sets can be regarded as such projections. As a consequence,
$(E,\mathscr{T},\le)$ is a normally preordered space
($T_1$-preordered space) if and only if
$(E/\!\!\sim,\mathscr{T}/\!\!\sim,\lesssim)$ is a normally ordered
space (resp.\ $T_1$-ordered space). Using this fact it is easy to
prove (see \cite[Cor. 4.3]{minguzzi11f})

\begin{theorem} \label{kjd}
If $(E,\mathscr{T},\le)$ is  a compact $T_2$-preordered space, then
{$(E/\!\!\sim,\mathscr{T}/\!\!\sim,\lesssim)$} is a compact
$T_2$-ordered space.
\end{theorem}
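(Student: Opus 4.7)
The plan is to verify the three defining properties of a compact $T_2$-ordered space in turn: compactness of $E/\!\!\sim$, antisymmetry of $\lesssim$, and closedness of $G(\lesssim)$ in the product topology on $E/\!\!\sim\times E/\!\!\sim$.

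Compactness and antisymmetry are immediate. The former holds because $\pi:E\to E/\!\!\sim$ is a continuous surjection out of the compact space $E$. For antisymmetry, if $[x]\lesssim[y]$ and $[y]\lesssim[x]$, representatives can be chosen with $x\le y$ and $y'\le x'$; combining with $x\sim x'$ and $y\sim y'$ one obtains $x\le y$ and $y\le x$, whence $x\sim y$ and $[x]=[y]$.

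The substantive step is closedness of $G(\lesssim)$. My approach leans on two facts already available in the excerpt. First, a compact $T_2$-preordered space is normally preordered by Theorem~2.4 of \cite{minguzzi11f}, and is also $T_1$-preordered since $[x]=d(x)\cap i(x)$ is closed. Second, by the projection-of-sets principle recalled just before the theorem, $(E,\mathscr{T},\le)$ being normally preordered (resp.\ $T_1$-preordered) is equivalent to $(E/\!\!\sim,\mathscr{T}/\!\!\sim,\lesssim)$ being normally ordered (resp.\ $T_1$-ordered). Hence $E/\!\!\sim$ is simultaneously normally ordered and $T_1$-ordered. To conclude, take $([x],[y])\notin G(\lesssim)$: the sets $d([y])$ and $i([x])$ are closed (by $T_1$-orderedness), decreasing and increasing respectively, and disjoint, because a common element would force $[x]\lesssim[y]$. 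Normal ordering then yields disjoint open decreasing $U\supset d([y])$ and open increasing $V\supset i([x])$, and the product $V\times U$ is a neighborhood of $([x],[y])$ disjoint from $G(\lesssim)$: indeed, $[u]\lesssim[v]$ with $[u]\in V$ increasing would force $[v]\in V$, contradicting $V\cap U=\emptyset$.

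The main obstacle is the normally preordered-to-normally ordered transfer, which is Corollary~4.3 of \cite{minguzzi11f}; it ultimately boils down to the cited correspondence between open/closed increasing/decreasing sets of $E$ and those of $E/\!\!\sim$ via $\pi$. A self-contained alternative for closedness avoids the transfer by applying normal preordering directly on $E$: given $x\not\le y$ one extracts a continuous isotone $f:E\to[0,1]$ with $f(x)>f(y)$, which descends through $\pi$ to a continuous isotone $\tilde f:E/\!\!\sim\to[0,1]$ with $\tilde f([x])>\tilde f([y])$, and then the preimages $\tilde f^{-1}((\alpha,1])$ and $\tilde f^{-1}([0,\beta))$ for $\tilde f([x])>\alpha>\beta>\tilde f([y])$ provide the required separating neighborhoods.
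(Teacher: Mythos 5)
Your proposal is correct and follows essentially the same route as the paper, which obtains the result from the transfer ``$E$ normally preordered ($T_1$-preordered) iff $E/\!\!\sim$ normally ordered ($T_1$-ordered)'' together with the fact that a compact $T_2$-preordered space is normally preordered (\cite[Theor.\ 2.4, Cor.\ 4.3]{minguzzi11f}); you merely make explicit the ``easy'' final step (compactness of the continuous image, antisymmetry of $\lesssim$, and the separation of $i([x])$ and $d([y])$ giving closedness of $G(\lesssim)$) that the paper delegates to the citation. Your functional alternative via continuous isotone representatives is also sound, but it is the same mechanism in Urysohn form rather than a genuinely different argument.
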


We are ready to establish the connection with the Nachbin
$T_2$-order compactification.

\begin{theorem} \label{msm}
Let $(E,\mathscr{T},\le)$ be a $T_2$-preordered Tychonoff  space
such that $E/\!\!\sim$ is a completely regularly ordered space, then
the preorder $\le$ is represented by the continuous isotone
functions on $E$. Let $\beta: E\to \beta E$ be the Hausdorff
\mbox{$T_2$-preorder} compactification constructed in Theorem
\ref{mbr} and let \mbox{$\Pi: \beta E \to \beta E/\!\!\sim_\beta$}
be the quotient projection on the $T_2$-ordered space $(\beta
E/\!\!\sim_\beta, \mathscr{T}_\beta / \!\! \sim_\beta,
\lesssim_\beta)$, then\footnote{The inverse $\pi^{-1}$ is
multivalued but the composition $\Pi\circ \beta\circ \pi^{-1}$ is a
well defined function.} $\Pi\circ \beta\circ \pi^{-1}: E/\!\!\sim\,
\to \beta E/\!\!\sim_\beta$ is a $T_2$-order compactification
equivalent to the Nachbin $T_2$-order compactification $n:
E/\!\!\sim \,\to n(E/\!\!\sim)$. In particular, up to equivalences,
the following diagram commutes
\[
\begin{CD}
E @>\beta>> \beta E \\
@V{\pi}VV @VV{\Pi}V \\
E/\!\!\sim @>n >> n(E/\!\!\sim)
\end{CD}
\]

\end{theorem}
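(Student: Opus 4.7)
The plan is to prove representability of $\le$ by continuous isotone functions on $E$, verify that $\bar\beta:=\Pi\circ\beta\circ\pi^{-1}$ is a $T_2$-order compactification of $E/\!\!\sim$, and then identify it with Nachbin's $n$ by showing it dominates every $T_2$-order compactification. For representability, if $x\not\le y$ then $[x]\not\lesssim[y]$ in $E/\!\!\sim$ (since $[x]\lesssim[y]$ would yield $x'\sim x$, $y'\sim y$ with $x'\le y'$, and transitivity of $\le$ would force $x\le y$), so complete regular orderedness of $E/\!\!\sim$ produces a continuous isotone $g:E/\!\!\sim\,\to[0,1]$ with $g([x])>g([y])$, and $f:=g\circ\pi$ is a continuous isotone function on $E$ separating the pair. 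This also confirms that the hypothesis of Theorem~\ref{mbr} holds, so $\beta E$ is indeed available.

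For the second step I would first observe that $\bar\beta$ is well defined and injective: because $\beta$ is a preorder embedding, $x\sim y$ iff $\beta(x)\sim_\beta\beta(y)$. Continuity of $\bar\beta$ follows from the universal property of the quotient $\pi$ applied to the continuous composite $\Pi\circ\beta$, since $\bar\beta\circ\pi=\Pi\circ\beta$. The order-embedding property is encoded in the chain $[x]\lesssim[y]$ iff $x\le y$ iff $\beta(x)\le_\beta\beta(y)$ iff $\Pi(\beta(x))\lesssim_\beta\Pi(\beta(y))$. Density of the image is inherited from density of $\beta(E)$ in $\beta E$ together with the continuity and surjectivity of $\Pi$, and Theorem~\ref{kjd} upgrades the compact $T_2$-preorder on $\beta E$ to a compact $T_2$-order on $\beta E/\!\!\sim_\beta$, so $\bar\beta$ is a genuine $T_2$-order compactification of $E/\!\!\sim$.

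The core step is the equivalence $\bar\beta\simeq n$, and here the main obstacle is showing that $\bar\beta$ dominates any given $T_2$-order compactification $c:E/\!\!\sim\,\to cE$; this extends the Stone-\v Cech argument of Theorem~\ref{mbr} through the quotient. The composite $c\circ\pi:E\to cE$ is continuous into a compact Hausdorff space, so the universal property of $\beta E$ yields a continuous $\tilde C:\beta E\to cE$ with $\tilde C\circ\beta=c\circ\pi$. The set $R:=(\tilde C\times\tilde C)^{-1}G(\le_c)$ is a closed preorder on $\beta E$ and a direct computation shows $R\cap(\beta(E)\times\beta(E))$ corresponds to $G(\le)$; strictness (Theorem~\ref{bnw}) then gives $G(\le_\beta)\subset R$, so $\tilde C$ is isotone. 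Since $\le_c$ is antisymmetric, $\tilde C$ is constant on $\sim_\beta$-classes and descends to a continuous isotone $C:\beta E/\!\!\sim_\beta\,\to cE$ with $C\circ\bar\beta=c$. Thus $c\le\bar\beta$, and since Nachbin's compactification is itself characterized as the largest $T_2$-order compactification of $E/\!\!\sim$, we conclude $\bar\beta\simeq n$. The displayed diagram then commutes up to equivalence by the definitional identity $\bar\beta\circ\pi=\Pi\circ\beta$.
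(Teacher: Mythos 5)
Most of your proposal is sound: the representability step, the well-definedness, continuity (via the universal property of $\pi$), injectivity, the order iff-chain, density, and the domination construction (extend $c\circ\pi$ to $\tilde C:\beta E\to cE$ by the Stone--\v Cech property, check that $R=(\tilde C\times\tilde C)^{-1}G(\le_c)$ is a closed preorder inducing $\le$, invoke Theorem \ref{bnw} to get $G(\le_\beta)\subset R$, and use antisymmetry of $\le_c$ to descend through $\Pi$) are all correct, and your use of Theorem \ref{bnw} plus the ``largest $T_2$-order compactification'' characterization of $n$ is a legitimate alternative to the paper's identification of $\varphi$ with $n$ via the extension-property characterization. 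The genuine gap is the sentence ``so $\bar\beta$ is a genuine $T_2$-order compactification of $E/\!\!\sim$.'' A compactification requires $\bar\beta$ to be a \emph{topological embedding}, i.e.\ a homeomorphism onto its image with the induced topology; a continuous injective map from the non-compact space $E/\!\!\sim$ into a compact Hausdorff space need not be open onto its image, and your iff-chain only settles the order part. This is exactly where the paper's proof does its real work: given $[x]$ in an open $N\subset E/\!\!\sim$, complete regular orderedness supplies a continuous isotone $\hat f$ and anti-isotone $\hat g$ with $\hat f([x])=\hat g([x])=1$ and $\min(\hat f,\hat g)=0$ off $N$; these are pulled back to $E$, extended (isotone, resp.\ anti-isotone) to $\beta E$ by Theorem \ref{mbr}, factored through $\Pi$, and the set where $\min(F,G)>0$ is the relatively open neighborhood witnessing that $\bar\beta(N)$ is open in $\bar\beta(E/\!\!\sim)$. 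Nothing in your write-up substitutes for this argument.

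The gap is repairable inside your own scheme, but only after reordering, because as written the argument is circular: you need $\bar\beta$ to already be a $T_2$-order compactification in order to say that the largest compactification $n$ dominates it. Note, however, that your construction of $C$ with $C\circ\bar\beta=c$ nowhere uses that $\bar\beta$ is an embedding. So you could first take $c=n$ and build the continuous isotone $C:\beta E/\!\!\sim_\beta\,\to nE$ with $C\circ\bar\beta=n$; since $n$ \emph{is} an embedding, for $N$ open in $E/\!\!\sim$ write $n(N)=W\cap n(E/\!\!\sim)$ with $W$ open in $nE$ and check that $C^{-1}(W)\cap\bar\beta(E/\!\!\sim)=\bar\beta(N)$, which yields the embedding property of $\bar\beta$; only then can you run the two-sided domination argument and conclude the equivalence and the commutativity of the diagram. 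Either this repair or the paper's direct $\min(\hat f,\hat g)$ construction must be supplied; without one of them the proof is incomplete.
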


\begin{proof}

The order $\lesssim$ on $E/\!\!\sim$ is represented by the
continuous isotone functions because $E/\!\!\sim$ is completely
regularly ordered. Since for $x,y\in E$, $x\le y$ iff $\pi(x)
\lesssim \pi(y)$, and the continuous isotone functions on $E$ pass
to the quotient, the continuous isotone functions on $E$ represent
$\le$.

The fact that  $(\beta E/\!\!\sim_\beta, \mathscr{T}_\beta / \!\!
\sim_\beta, \lesssim_\beta)$ is $T_2$-ordered follows from Theorem
\ref{kjd}.

The expression $\varphi:= \Pi\circ \beta\circ \pi^{-1}$ gives a well
defined function, indeed suppose $x,y\in E$ project on the same
element $[x]\in E/\!\!\sim$, then $x\sim y$ and since $\beta$ is a
preorder embedding $\beta(x)\sim_\beta \beta(y)$ which implies
$\Pi(\beta(x))=\Pi(\beta(y))$.

The function $\varphi$ is continuous, indeed let $O\subset\beta
E/\!\!\sim_\beta$ be an open subset then $\beta^{-1}(\Pi^{-1}(O))$
is open and if $x\in \beta^{-1}(\Pi^{-1}(O))$ and $y\sim x$ then as
$\beta$ is a preorder embedding $\beta(y)\sim_\beta \beta(x)$,
$\beta(x)\in \Pi^{-1}(O)$ which implies $\beta(y)\in  \Pi^{-1}(O)$
and hence $y\in \beta^{-1}(\Pi^{-1}(O))$. The open set
$\beta^{-1}(\Pi^{-1}(O))\subset E$, being projectable has an open
projection by definition of quotient topology which implies that
$\varphi^{-1}(O)$ is open.

Let us prove that $\varphi$ is isotone. Let $[x]\lesssim [y]$,
$x,y\in E$, then $x\le y$ and, since $\beta$ is a preorder
embedding, $\beta(x)\le_\beta \beta(y)$, and finally
$\Pi(\beta(x))\le_\beta \Pi(\beta(y))$ by definition of quotient
order.

Let us prove that $\varphi$ is injective. Let $[x], [y]\in
E/\!\!\sim$ be such that $\varphi([x])=\varphi([y])$, that is,
$\Pi(\beta(x))= \Pi(\beta(y))$. This equality implies $
\beta(x)\sim_\beta \beta(y)$, and since $\beta$ is a preorder
embedding $x\sim y$, that is, $[x]=[y]$.

Let us prove that $\varphi^{-1}\vert_{\varphi(E/\!\!\sim)}:
\varphi(E/\!\!\sim) \to  E/\!\!\sim$ is isotone. Let $x,y\in E$ and
$\Pi(\beta(x))\lesssim_\beta \Pi(\beta(y))$ then $\beta(x)\le_\beta
\beta(y)$ and, since $\beta$ is a preorder embedding, $x\le y$ which
implies $[x]\lesssim [y]$.

Let us prove that $\varphi$ is an embedding. Since $\pi$ is
continuous, given an open set $N\subset E/\!\!\sim$ we have that
$\pi^{-1}(N)$ is open, thus we have only to prove that $\Pi\circ
\beta$ sends open sets on $E$ of the form $\pi^{-1}(N)$  to open
sets on $\Pi\circ \beta(E)$ with the topology induced from $\beta
E/\!\!\sim_\beta$. Let $O\subset E$ be an open set of the form
$O=\pi^{-1}(N)$ with $N$ open set on $E/\!\!\sim$ and let $x\in O$
(thus $[x]\in N$). Since $E/ \!\!\sim$ is completely regularly
ordered space there are \cite{nachbin65} a continuous isotone
function $\hat{f}:E/ \!\!\sim\,\to [0,1]$ and a continuous
anti-isotone function $\hat{g}:E/ \!\!\sim\, \to[0,1]$ such that
$\hat{f}([x])=\hat{g}([x])=1$ and
$\min(\hat{f}([y]),\hat{g}([y]))=0$ for $[y]\in E\backslash N$.

Let us define $f=\hat{f}\circ \pi$, $g=\hat{g}\circ \pi$, so that
they are respectively continuous isotone and continuous anti-isotone
and such that  $f(x)=g(x)=1$ and $\min(f(y),g(y))=0$ for $y\in
E\backslash O$.

The functions $ f,g(\circ \beta^{-1})$ extend to functions
$\tilde{f},\tilde{g}:\beta E\to [0,1]$ respectively isotone and
anti-isotone (extend $-g$ in place of $g$ and take minus the
extended function).  Since they are isotone or anti-isotone there
are continuous functions $F,G: \beta E/\!\!\sim_\beta \to [0,1]$,
respectively isotone and anti-isotone, such that
$\tilde{f}=F\circ\Pi$, $\tilde{g}=G\circ \Pi$ (continuity follows
from the universality property of the quotient map \cite[Theor.\
9.4]{willard70}).

The function $h=\min(\tilde{f},\tilde{g})=\min(F,G)\circ \Pi$ is
continuous and vanishes on $\beta(E\backslash O)$ and hence
$\min(F,G)$ vanishes on $(\Pi\circ \beta) (E\backslash
O)=\varphi((E/\!\!\sim)\backslash N)$ and equals 1 on
$[\beta(x)]_{\beta}=\varphi(x)$. Since $\varphi$ is injective the
open set $Q=\{[w]_\beta \in \beta E/\!\!\sim_\beta:
\min(F([w]_\beta),G([w]_\beta))>0\}$ contains $\varphi(x)$ and is
such that $Q\cap \varphi(E/\!\!\sim) \subset \varphi(N)$ which
proves, due to the arbitrariness of $[x]$, that $\varphi(N)$ is open
in the topology induced on $\varphi(E/\!\!\sim)$ by $\beta
E/\!\!\sim_\beta$. We infer that $\varphi$ is an embedding and since
it is isotone with its inverse it is a preorder embedding.


If $[z]_\beta \in  (\beta
E/\!\!\sim_\beta)\backslash\varphi(E/\!\!\sim)$ and $W$ is an open
set containing $[z]_\beta$ then $\Pi^{-1}(W)$ is open and since
$\beta$ is a dense embedding there is some $r\in E$ such that
$\beta(r)\in \Pi^{-1}(W)$, thus $[r]\in E/\!\!\sim$ is such that
$\varphi([r])\in W$, that is, $\varphi(E/\!\!\sim)$ is dense in
$\beta E/\!\!\sim_\beta$ and hence $\varphi$ is a $T_2$-order
compactification.

Now, let  $\hat{f}:E/\!\!\sim \to [0,1]$ be a continuous isotone
function, and let $f=\hat{f}\circ \pi$. The function  $f:E\to [0,1]$
is a continuous isotone function and we know that there is a
continuous isotone function $\tilde{f}:\beta E\to [0,1]$ which
extends $f\circ\beta^{-1}:\beta(E)\to [0,1]$. Since $\tilde{f}$ is
isotone there is some continuous isotone function $F: \beta
E/\!\!\sim_\beta \to [0,1]$ (continuity follows from the
universality property of the quotient map) such that
$\tilde{f}=F\circ \Pi$, thus $F$ extends $\hat{f}\circ \varphi^{-1}:
\varphi(E/\!\!\sim) \,\to [0,1]$. Since the Nachbin $T_2$-order
compactification is characterized by this extension property
\cite{nachbin65,fletcher82} it follows that $\varphi$ is equivalent
to $n$.

Finally, $\varphi\circ \pi=( \Pi\circ \beta\circ \pi^{-1})\circ
\pi=\Pi\circ \beta$ which proves that, up to equivalences, the
diagram commutes.

\end{proof}

\begin{corollary}
Let $E$ be a completely regularly ordered space, let $\beta: E\to
\beta E$ be the Hausdorff $T_2$-preorder compactification
constructed in Theorem \ref{mbr} and let $\Pi: \beta E \to \beta
E/\!\!\sim_\beta$ be the quotient projection on the $T_2$-ordered
space $(\beta E/\!\!\sim_\beta, \mathscr{T}_\beta / \!\! \sim_\beta,
\lesssim_\beta)$, then $\Pi\circ \beta: E\to \beta E/\!\!\sim_\beta$
is a $T_2$-order compactification equivalent to the Nachbin
$T_2$-order compactification $n: E\to nE$.
\end{corollary}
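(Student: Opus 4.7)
The plan is to reduce the corollary directly to Theorem \ref{msm}. Since $E$ is a \emph{completely regularly ordered} space, the preorder $\le$ is antisymmetric, so the equivalence relation $\sim$ coincides with equality on $E$; consequently the quotient projection $\pi: E\to E/\!\!\sim$ is a preorder homeomorphism, and $E$ may be canonically identified with $E/\!\!\sim$.

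First I would verify that the hypotheses of Theorem \ref{msm} are satisfied. As recalled in the introduction, a completely regularly preordered space whose preorder is an order is automatically Tychonoff, while clause (ii) of the definition of complete regular orderedness exhibits $G(\le)$ as the intersection of the closed graphs $G_f$ of the continuous isotone functions $f:E\to[0,1]$. Hence $E$ is a $T_2$-preordered Tychonoff space whose preorder is represented by continuous isotone functions, and the quotient $E/\!\!\sim$, being preorder homeomorphic to $E$ via $\pi$, is trivially a completely regularly ordered space. Thus all the hypotheses of Theorem \ref{msm} are met.

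Theorem \ref{msm} then furnishes a $T_2$-order compactification $\varphi := \Pi\circ\beta\circ\pi^{-1}: E/\!\!\sim\,\to \beta E/\!\!\sim_\beta$ equivalent to the Nachbin compactification $n: E/\!\!\sim\,\to n(E/\!\!\sim)$. Precomposing with $\pi$ gives $\varphi\circ\pi = \Pi\circ\beta: E\to \beta E/\!\!\sim_\beta$, and since $\pi$ is a preorder homeomorphism, the Nachbin compactification $n: E\to nE$ is (up to equivalence) just $n_{E/\!\!\sim}\circ\pi$; the domination maps witnessing $\varphi \sim n_{E/\!\!\sim}$ therefore also witness $\Pi\circ\beta \sim n$ as compactifications of $E$. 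There is no serious obstacle here; the only point to watch is that equivalence of preorder compactifications is preserved under precomposition of the source embedding with a preorder homeomorphism of the underlying space, so that the equivalence obtained over $E/\!\!\sim$ lifts via $\pi$ to the desired equivalence over $E$.
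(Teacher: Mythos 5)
Your proposal is correct and follows the paper's own route: the paper likewise deduces the corollary from Theorem \ref{msm} by observing that a completely regularly ordered space is a $T_2$-preordered Tychonoff space, with the antisymmetry of $\le$ making $\sim$ trivial so that $E$ is identified with $E/\!\!\sim$ and $\pi$ is (essentially) the identity. Your extra remarks on lifting the equivalence along the preorder homeomorphism $\pi$ are accurate but only make explicit what the paper leaves implicit.
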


\begin{proof}
It follows from  the previous theorem noting that a completely
regularly ordered space is a  $T_2$-preordered Tychonoff space.
\end{proof}

If $E$ is a completely regularly ordered space the preorder
compactification $\beta$ need not be equivalent with the Nachbin
compactification. Consider for instance the interval $[0,1)$ with
the usual topology and order. The Nachbin compactification is given
by $[0,1]$ but $\beta([0,1))$ includes many more points.

\subsection{The smallest Hausdorff $T_2$-preorder compactification}

In this section we make some progress in the problem of finding the
smallest Hausdorff $T_2$-preorder compactification of a topological
preordered space in those cases in which it exists. The problem of
identifying and characterizing the smallest $T_2$-order
compactification was considered in
\cite{mccartan68,mccallion72,kunzi92b,richmond93,liu97b}.

In this section $(E,\mathscr{T},\le)$ is a locally compact
$T_2$-preordered Tychonoff space and $\mathcal{F}$ is the family of
continuous isotone functions $f:E\to [0,1]$. Accordingly with the
necessary conditions singled out in Prop.\ \ref{bos}, we shall
assume that the preorder is represented by the continuous isotone
functions i.e. $G(\le)=\bigcap_{f\in \mathcal{F}} G_f$.

Let $\mathcal{C}$, $\mathcal{C}^-$ and $\mathcal{C}^+$ be the
families of continuous functions  in $[0,1]$ which are  constant
outside a compact set, which have compact support and which have
value 1 outside a compact set, respectively.

For every $\mathcal{H}\subset \mathcal{F}$ such that
$G(\le)=\bigcap_{h\in \mathcal{H}}G_h$ we can construct a
$T_2$-preorder compactification $(cE,\mathscr{T}_c,\le_c)$, which we
call $\mathcal{H}$-compactification, through the embedding $c: E\to
[0,1]^{\mathcal{H}\cup \mathcal{C}}$ identifying $cE$ with the
closure of the image. Indeed,  the family $\mathcal{H}\cup
\mathcal{C}$ separates points and has an initial topology coincident
with $\mathscr{T}$ (thanks to local compactness and the inclusion of
$\mathcal{C}$ in the family) thus $c$ is an embedding \cite[Theor.\
8.12]{willard70}. The topology $\mathscr{T}_c$ is that induced from
the product topology in $[0,1]^{\mathcal{H}\cup \mathcal{C}}$ on
$cE$.

We define the $T_2$-preorder $\preceq$ on $[0,1]^{\mathcal{H}\cup
\mathcal{C}}$ as that given by $x\preceq y$ iff $x_h\le_h y_h$ for
every $h\in \mathcal{H}$, where $\le_h$ is the usual order on the
h-th factor [0,1]. This preorder is  closed because the projections
$\pi_h : [0,1]^{\mathcal{H}\cup \mathcal{C}} \to \mathbb{R}$ are
continuous, and hence $G(\preceq)=\bigcap_{h\in
\mathcal{H}}(\pi_h\times \pi_h)^{-1}G(\le_h)$ is closed. It is a
preorder rather than an order because two points can have the same
$h$-components while being different. The $T_2$-preorder $\le_c$ on
$cE$ is that induced by $\preceq$ and is again closed because of the
hereditarity of the $T_2$-preorder property. Finally, $c: E\to c(E)$
is isotone with its inverse because $G(\le)=\bigcap_{h\in
\mathcal{H}}G_h$.

Observe that $h\circ c^{-1}: c(E)\to [0,1]$ extends to the
continuous isotone function $\pi_h\vert_{cE}$, that is, the
continuous isotone functions belonging to $\mathcal{H}$ are
extendable to the $\mathcal{H}$-compactification $cE$ keeping the
same properties.

\begin{remark} \label{nux}
The just defined  $\mathcal{H}$-compactification gives back the
usual one-point compactification  if the preorder $\le$ is
indiscrete and  $\mathcal{H}$ is chosen empty (the additional point
is that of coordinates $f_c$, $c\in \mathcal{C}$, where $f_c$ is the
constant value taken by $c$ outside a compact set).

If the preorder $\le$  is discrete and $\mathcal{H}$ is chosen to
coincide with $\mathcal{C}$ then the compactified space is still the
one-point compactification but endowed with the discrete preorder.
If $\mathcal{H}$ is chosen equal to $\mathcal{C}^-$, then the added
point is less than any other point. If $\mathcal{H}$ is chosen equal
to $\mathcal{C}^+$, then the added point is greater than any other
point.
\end{remark}

In the next proofs we shall often identify $c(E)$ with $E$
especially when referring to the extension of functions.

\begin{proposition} \label{dhd}
Let $c:E\to cE$ be a $\mathcal{H}$-compactification. The remainder
$cE\backslash c(E)$ endowed with the preorder induced from $\le_c$
is a $T_2$-ordered space.
\end{proposition}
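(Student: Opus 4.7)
The plan is to verify the two defining properties of a $T_2$-ordered space on the remainder $R := cE \setminus c(E)$: the closed graph of the induced preorder, and antisymmetry. The first is automatic because the $T_2$-preorder property is hereditary, so $R$ with the subspace topology and induced preorder is $T_2$-preordered. Thus the whole content of the proof lies in proving antisymmetry.

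The main tool I would use is a ``coordinate rigidity'' observation: \emph{for every $z\in R$ and every $f\in\mathcal{C}$, the coordinate $\pi_f(z)$ equals the constant value $f_\infty$ that $f$ takes outside its defining compact set $K_f$.} Once this is established, antisymmetry follows immediately: if $x,y\in R$ with $x\le_c y$ and $y\le_c x$, then the definition of $\preceq$ on $[0,1]^{\mathcal{H}\cup\mathcal{C}}$ forces $\pi_h(x)=\pi_h(y)$ for all $h\in\mathcal{H}$, while the coordinate rigidity gives $\pi_f(x)=f_\infty=\pi_f(y)$ for all $f\in\mathcal{C}$. Hence $x$ and $y$ agree on every coordinate of the product, so $x=y$.

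To establish the coordinate rigidity, I would invoke local compactness of $E$. Since $E$ is locally compact Hausdorff and embeds densely into the compact Hausdorff space $cE$, the image $c(E)$ is open in $cE$, so $R$ is closed. Given $z\in R$, pick a net $\{x_\alpha\}\subset E$ with $c(x_\alpha)\to z$ in $cE$. If $\{x_\alpha\}$ were frequently in some compact $K\subset E$, then $c(K)$ would be compact and hence closed in $cE$, a subnet would converge inside $c(K)\subset c(E)$, and by Hausdorffness the limit would be $z\in c(E)$, a contradiction. Therefore the net eventually leaves every compact subset of $E$; in particular, for the compact set $K_f$ associated with $f\in\mathcal{C}$, we have $f(x_\alpha)=f_\infty$ eventually, so by continuity of $\pi_f$ we conclude $\pi_f(z)=f_\infty$.

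The main obstacle is precisely this coordinate-rigidity lemma, since it is the only place where local compactness is genuinely used; the rest is pure bookkeeping through the definition of $\preceq$. Everything else, including the hereditary closedness of the graph, is formal.
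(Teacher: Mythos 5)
Your proof is correct and takes essentially the same approach as the paper: hereditarity of the $T_2$-preorder property handles closedness of the induced graph, and antisymmetry is reduced to the same ``coordinate rigidity'' fact that every $f\in\mathcal{C}$ has a constant extension on the remainder, so that agreement of the $\mathcal{H}$-coordinates forces equality. The only (minor) difference is the mechanism for that rigidity: the paper separates the compact remainder from $c(K_f)$ using normality of $cE$, while you use a net argument showing that nets converging to a remainder point eventually escape every compact subset of $E$; both implement the same idea.
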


\begin{proof}
Since the $T_2$-preorder property is hereditary the remainder is a
$T_2$-preordered  space. Let $x,y\in cE\backslash c(E)$ and suppose
that $x\le_c y\le_c x$ then $x\preceq y \preceq x$, that is for the
(necessarily unique as $c(E)$ is dense in $cE$) continuous isotone
extension $H: cE\to [0,1]$, $H=\pi_h\vert_{cE}$, of
$h\in\mathcal{H}$ we have $H(x)\le H(y)\le H(x)$, which reads
$H(x)=H(y)$. We have only to prove that for every $f\in
\mathcal{C}$, $\pi_f(x)=\pi_f(y)$ from which it follows $x=y$. But
by local compactness $c(E)$ is open in $cE$ thus $cE\backslash c(E)$
is compact and can be separated by open sets (as $cE$ is Hausdorff
and compact hence normal) from the compact set outside which $f$ is
constant. Thus the extension $\pi_f\vert_{cE}$ of $f\in \mathcal{C}$
takes a constant value  on the whole remainder, which implies
$\pi_f(x)=\pi_f(y)$.
\end{proof}

We remark that the previous result does not imply that if $\le$ is
an order then $\le_c$ is an order, but only that if $x\le_c y\le_c x
$, then one point among $x$ and $y$ belongs to $c(E)$ while the
other belongs to $cE\backslash c(E)$.

\begin{proposition} \label{mqo}
Let  $(E,\mathscr{T},\le)$ be a locally compact $T_2$-preordered
Tychonoff space then every $T_2$-preordered Hausdorff
compactification $c:E\to cE$ dominates a
$\mathcal{H}$-compactification for a family $\mathcal{H}\subset
\mathcal{F}$ where $\mathcal{H}$ is such that $G(\le)=\bigcap_{h\in
\mathcal{H}}G_h$. The family $\mathcal{H}$  is made by those
continuous isotone function with value in [0,1] in $E$ that extend
with the same properties to $cE$.
\end{proposition}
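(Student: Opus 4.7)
The plan is to take $\mathcal{H}$ as the family named in the statement, namely those continuous isotone functions $f:E\to [0,1]$ that admit a continuous isotone extension $\tilde f : cE \to [0,1]$. Two things must then be verified: that $\mathcal{H}$ represents $\le$, i.e.\ $G(\le) = \bigcap_{h\in \mathcal{H}} G_h$ (so that the $\mathcal{H}$-compactification $c':E\to c'E$ is well defined), and that $c$ dominates $c'$, i.e.\ there exists a continuous isotone $C:cE\to c'E$ with $C\circ c = c'$.

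For representativity, the inclusion $G(\le)\subset \bigcap_{h\in \mathcal{H}}G_h$ is immediate from the isotonicity of the members of $\mathcal{H}$. Conversely, given $x,y\in E$ with $x\not\le y$, the preorder embedding $c$ gives $c(x)\not\le_c c(y)$; since $(cE,\mathscr{T}_c,\le_c)$ is compact $T_2$-preordered, hence normally preordered, there is a continuous isotone $F:cE\to [0,1]$ with $F(c(x))>F(c(y))$. Then $h:=F\circ c$ is a continuous isotone function on $E$ whose unique (by density of $c(E)$ and Hausdorffness of $cE$) continuous extension to $cE$ is $F$ itself, so $h\in \mathcal{H}$ and $h(x)>h(y)$, as required.

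For the domination, each $h\in \mathcal{H}$ supplies its continuous isotone extension $\tilde h$ by definition, while for each $f\in \mathcal{C}$ with constant value $\lambda$ outside a compact $K\subset E$, I would extend $f$ to $cE$ by setting $\tilde f\equiv\lambda$ on the remainder $cE\setminus c(E)$. Local compactness of $E$ makes $c(E)$ open in $cE$, so the remainder is compact and disjoint from the closed set $c(K)$; for any $y$ in the remainder one can pick a neighborhood $V$ of $y$ with $V\cap c(K)=\emptyset$, on which $\tilde f\equiv\lambda$, yielding continuity. The map $C:cE\to [0,1]^{\mathcal{H}\cup \mathcal{C}}$ given by $C(y):=(\tilde f(y))_{f\in \mathcal{H}\cup \mathcal{C}}$ is then continuous and satisfies $C\circ c = c'$; compactness of $C(cE)$ together with density of $c'(E)$ in $c'E$ forces $C(cE)=c'E$. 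Isotonicity of $C$ follows since the preorder $\preceq$ on the product constrains only the $\mathcal{H}$-coordinates, where the $\tilde h$ are isotone by construction. The main obstacle is the continuous extension of the $\mathcal{C}$-functions to $cE$, which is precisely where local compactness is essential: without it the remainder might fail to be closed or to be separated from the compact supports, and the naive constant extension would lose continuity.
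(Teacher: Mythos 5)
Your proposal is correct and follows essentially the same route as the paper: take $\mathcal{H}$ to be the (restrictions of) continuous isotone functions on $cE$, use normal preorderedness of the compact $T_2$-preordered space $cE$ to get representativity of $\le$, and build the dominating map coordinatewise, extending the $\mathcal{C}$-functions constantly over the remainder via local compactness. Your explicit continuity check for the constant extension is exactly the content of the paper's assertion that the restrictions of $\mathcal{C}_{c_1}$ to $c_1(E)$ give back $\mathcal{C}$, so there is no substantive difference.
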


\begin{proof}
Let $c_1:E\to c_1E$ be a $T_2$-preordered Hausdorff
compactification. Since $(c_1E, \mathscr{T}_{c_1},\le_{c_1})$ is a
compact $T_2$-preordered space it is normally preordered, thus the
family of continuous isotone functions with values in $[0,1]$,
$\mathcal{H}_{c_1}$, is such that for $x,y\in c_1E$, $x\le_{c_1} y$
if and only if for every $F\in \mathcal{H}_{c_1}$ we have $F(x)\le
F(y)$. Let $\mathcal{H}$ be made by those functions which are the
restriction of the elements of $\mathcal{H}_{c_1}$ to $E$. With this
definition $G(\le)=\bigcap_{h\in \mathcal{H}}G_h$. Let $c_2:E\to
c_2E\subset [0,1]^{\mathcal{H} \cup \mathcal{C}}$ be the
$\mathcal{H}$-compactification and let us prove that $c_1$ dominates
$c_2$.

A continuous isotone map $C:c_1E\to c_2E$ such that $C\circ c_1=c_2$
can be constructed as follows. By local compactness $c_1(E)$ is open
and $c_1E\backslash c_1(E)$ is closed and compact. We consider the
family $\mathcal{H}_{c_1}\cup \mathcal{C}_{c_1}$ where
$\mathcal{C}_{c_1}$ is the family of continuous functions with value
in $[0,1]$ on $c_1E$ which are constant outside a compact set
disjoint from $c_1E\backslash c_1(E)$. The restriction of the
elements of the family $\mathcal{C}_{c_1}$ to $c_1(E)$ gives back
$\mathcal{C}$. By definition, the map $C$ sends $x \in c_1E$ to the
point of $[0,1]^{\mathcal{H}_{c_1}\cup \mathcal{C}_{c_1}}$ whose $f$
coordinate is the value $f(x)$, $f\in \mathcal{H}_{c_1}\cup
\mathcal{C}_{c_1}$. This map is continuous \cite[Theor.\
8.8]{willard70} and isotone, where we define the preorder on
$[0,1]^{\mathcal{H}_{c_1}\cup \mathcal{C}_{c_1}}$ as that determined
by the family $\mathcal{H}_{c_1}$. Let us prove that its image is
included in $c_2E$. From the definitions we have that if $x\in
c_1(E)$ then $C(x)$ belongs to $c_2(E)$. As $C$ is continuous, and
$c_1(E)$ is dense in $c_1E$, if $x\in c_1E$ its image $C(x)$ belongs
to the closure of $c_2(E)$ namely to $c_2E$.
\end{proof}

\begin{proposition} \label{pek}
If $\mathcal{H}_2\supset \mathcal{H}_1$ then the
$\mathcal{H}_2$-compactification dominates over the
$\mathcal{H}_1$-compactification.
\end{proposition}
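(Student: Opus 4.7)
The plan is to use the natural projection between the ambient product spaces. Since $\mathcal{H}_2\supset \mathcal{H}_1$, we have $\mathcal{H}_2\cup\mathcal{C}\supset \mathcal{H}_1\cup\mathcal{C}$, which yields a canonical coordinate-forgetting projection
\[
P:[0,1]^{\mathcal{H}_2\cup\mathcal{C}}\to [0,1]^{\mathcal{H}_1\cup\mathcal{C}}.
\]
This $P$ is continuous by the universal property of the product topology, and I would first check that $P$ is isotone with respect to the preorders $\preceq_2$ and $\preceq_1$ defined in the construction of the $\mathcal{H}_i$-compactifications: if $x\preceq_2 y$ then $x_h\le y_h$ for every $h\in\mathcal{H}_2$, so a fortiori for every $h\in\mathcal{H}_1$, giving $P(x)\preceq_1 P(y)$.

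Next I would verify that $P$ sends $c_2 E$ into $c_1 E$. By construction of the $\mathcal{H}_i$-compactifications as embeddings through the evaluation map, one has $P\circ c_2=c_1$ on $E$, hence $P(c_2(E))=c_1(E)$. Since $c_2 E$ is by definition the closure of $c_2(E)$ in $[0,1]^{\mathcal{H}_2\cup\mathcal{C}}$ and $P$ is continuous, $P(c_2 E)\subset \overline{P(c_2(E))}=\overline{c_1(E)}=c_1 E$. Thus $C:=P\vert_{c_2 E}:c_2 E\to c_1 E$ is well defined.

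Finally, $C$ is continuous as a restriction of a continuous map, and it is isotone for $\le_{c_2}$ and $\le_{c_1}$ because these preorders are the ones induced from $\preceq_2$ and $\preceq_1$ on the respective subspaces, and $P$ is already $\preceq_2$-to-$\preceq_1$ isotone. The identity $C\circ c_2=c_1$ then shows, by the definition of domination, that $c_1\le c_2$.

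I do not foresee a real obstacle: the statement is essentially a functoriality remark about the construction, and the whole argument reduces to the observation that forgetting coordinates is continuous and isotone. The only mildly subtle point is making sure the preorder on $c_i E$, which is the one \emph{induced} from $\preceq_i$ on the ambient product, is compatible with $P$; this is immediate from the fact that the induced preorder on a subspace is the restriction of the ambient one, so isotonicity of $P$ at the ambient level descends automatically to $C$.
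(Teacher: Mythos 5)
Your proof is correct and follows essentially the same route as the paper: the paper's argument is precisely that the required map $C$ is the restriction to $c_2E$ of the coordinate-forgetting projection $\Pi:[0,1]^{\mathcal{H}_2\cup\mathcal{C}}\to[0,1]^{\mathcal{H}_1\cup\mathcal{C}}$. You merely spell out the verifications (continuity, isotonicity of the projection, $P\circ c_2=c_1$, and the density/closure argument showing $P(c_2E)\subset c_1E$) that the paper leaves implicit.
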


\begin{proof}
Indeed, if $c_2: E\to c_2E\subset [0,1]^{\mathcal{H}_2 \cup
\mathcal{C}}$ is the former and $c_1:E\to c_1 E \subset
[0,1]^{\mathcal{H}_1 \cup \mathcal{C}}$ is the latter preorder
compactification, then there is a continuous isotone map $C :c_2E\to
c_1E$ such that $C\circ c_2=c_1$. This map is the restriction to
$c_2E$ of $\Pi: [0,1]^{\mathcal{H}_2 \cup \mathcal{C}}\to
[0,1]^{\mathcal{H}_1 \cup \mathcal{C}}$ where $\Pi$ identifies
points with the same coordinates belonging to the set $\mathcal{H}_1
\cup \mathcal{C}$.
\end{proof}

Once a $\mathcal{H}$-compactification is given it is well possible
that some $f\in \mathcal{F}\backslash \mathcal{H}$ could be
extendable as a continuous isotone function to the whole
compactification. Let $i(\mathcal{H})$ be the subset of
$\mathcal{F}$ of so extendable functions. This set being larger than
$\mathcal{H}$ has again the property that it represents $\le$.

\begin{proposition} \label{inv}
The $\mathcal{H}$-compactification and the
$i(\mathcal{H})$-compactification are equivalent.
\end{proposition}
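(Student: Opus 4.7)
The plan is to invoke Proposition \ref{pek} for one direction and construct an explicit dominating map for the other. Write $c_1: E \to c_1 E \subset [0,1]^{\mathcal{H} \cup \mathcal{C}}$ for the $\mathcal{H}$-compactification and $c_2: E \to c_2 E \subset [0,1]^{i(\mathcal{H}) \cup \mathcal{C}}$ for the $i(\mathcal{H})$-compactification. Since $\mathcal{H} \subset i(\mathcal{H})$, Proposition \ref{pek} immediately yields $c_1 \le c_2$, so the entire work is to show $c_2 \le c_1$.

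For the nontrivial direction I would build a continuous isotone map $C: c_1 E \to c_2 E$ with $C \circ c_1 = c_2$ as follows. For each $g \in i(\mathcal{H}) \cup \mathcal{C}$ let $\tilde g: c_1 E \to [0,1]$ be the canonical continuous isotone extension: if $g \in \mathcal{H} \cup \mathcal{C}$ take $\tilde g = \pi_g|_{c_1 E}$, and if $g \in i(\mathcal{H}) \setminus \mathcal{H}$ take $\tilde g$ to be the continuous isotone extension guaranteed by the very definition of $i(\mathcal{H})$. Define $C: c_1 E \to [0,1]^{i(\mathcal{H}) \cup \mathcal{C}}$ componentwise by $\pi_g \circ C = \tilde g$. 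Then $C$ is continuous by the universal property of the product topology \cite[Theor.\ 8.8]{willard70}, and it is isotone with respect to $\preceq$ because each $\tilde g$ with $g \in i(\mathcal{H})$ is isotone on $c_1 E$.

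Next I would check that the image of $C$ lies in $c_2 E$. By construction, for $x \in E$ we have $\pi_g(C(c_1(x))) = \tilde g(c_1(x)) = g(x) = \pi_g(c_2(x))$ for every $g \in i(\mathcal{H}) \cup \mathcal{C}$, so $C \circ c_1 = c_2$ on $E$; in particular $C(c_1(E)) = c_2(E) \subset c_2 E$. Since $c_1(E)$ is dense in $c_1 E$, $C$ is continuous, and $c_2 E$ is closed in $[0,1]^{i(\mathcal{H}) \cup \mathcal{C}}$, it follows that $C(c_1 E) \subset \overline{c_2(E)} = c_2 E$. Thus $C$ witnesses $c_2 \le c_1$, and combined with $c_1 \le c_2$ the two compactifications are equivalent.

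The only subtle point is confirming that the component $\tilde g$ really exists with the required properties for every $g \in i(\mathcal{H}) \cup \mathcal{C}$, which is exactly what the definition of $i(\mathcal{H})$ (together with the fact that $\mathcal{C}$ is already built into the $\mathcal{H}$-compactification) supplies; once this is in hand, the rest is a routine density argument, so I do not expect a serious obstacle.
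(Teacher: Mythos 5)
Your proposal is correct and follows essentially the same route as the paper: one direction via Proposition \ref{pek} from $\mathcal{H}\subset i(\mathcal{H})$, and the other via the componentwise map built from the continuous isotone extensions of the functions in $i(\mathcal{H})\cup\mathcal{C}$ to the $\mathcal{H}$-compactified space, with continuity from the universal property of the product and the image controlled by the density argument. No gaps.
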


\begin{proof}
Since $\mathcal{H}\subset i(\mathcal{H})$ the
$i(\mathcal{H})$-compactification dominates over the
$\mathcal{H}$-compacti-fication. For  the converse let $c_2: E\to
c_2E\subset [0,1]^{\mathcal{H} \cup \mathcal{C}}$ be the
$\mathcal{H}$-compactification and let $c_1: E\to c_1E\subset
[0,1]^{i(\mathcal{H}) \cup \mathcal{C}}$ be the
$i(\mathcal{H})$-compactification. A continuous isotone map
$C:c_2E\to c_1E$ such that $C\circ c_2=c_1$  can be constructed as
follows. All the functions of $i(\mathcal{H})\cup\mathcal{C}$ extend
(uniquely because $c_2(E)$ is dense in $c_2E$) from $E$ to $c_2E$
thus to every $x\in c_2E$ we assign the image $C(x)$ given by the
point of $ [0,1]^{i(\mathcal{H}) \cup \mathcal{C}}$ having as
coordinates the values taken by the functions belonging to
$i(\mathcal{H})\cup\mathcal{C}$. By construction $C$ is continuous
\cite[Theor.\ 8.8]{willard70}. Let us prove that the image is
included in  $c_1E$. From the definitions we have that if $x\in
c_2(E)$ then $C(x)$ belongs to $c_1(E)$. As $C$ is continuous, and
$c_2(E)$ is dense in $c_2E$, if $x\in c_2E$ its image $C(x)$ belongs
to the closure of $c_1(E)$ namely to $c_1E$. The fact that $C$ is
isotone follows immediately from the definition of preorder in $
[0,1]^{i(\mathcal{H}) \cup \mathcal{C}}$ and from the fact that the
extension of the function in $i(\mathcal{H})$ to $c_2E$ are, by
assumption, continuous and isotone.
\end{proof}

\begin{corollary} \label{jyh}
Let $P(\mathcal{F})$ denote the family of subsets of $\mathcal{F}$.
The map $i: P(\mathcal{F})\to P(\mathcal{F})$ is idempotent, namely
$i(i(\mathcal{H}))=i(\mathcal{H})$. Furthermore, if
$\mathcal{H}_1\subset \mathcal{H}_2$ then $i(\mathcal{H}_1)\subset
i(\mathcal{H}_2)$.
\end{corollary}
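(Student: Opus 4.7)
The plan is to deduce both properties as formal consequences of Propositions \ref{pek} and \ref{inv}, which already do most of the work.

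First, for the monotonicity statement, suppose $\mathcal{H}_1\subset \mathcal{H}_2$ and pick $f\in i(\mathcal{H}_1)$. Let $c_k: E\to c_kE$ denote the $\mathcal{H}_k$-compactification for $k=1,2$. Proposition \ref{pek} supplies a continuous isotone map $C: c_2E\to c_1E$ satisfying $C\circ c_2=c_1$. Since $f\in i(\mathcal{H}_1)$, there is a continuous isotone extension $\tilde{f}: c_1E\to [0,1]$ of $f\circ c_1^{-1}$. The composition $\tilde{f}\circ C: c_2E\to [0,1]$ is then continuous and isotone (composition of continuous isotone maps), and $\tilde{f}\circ C\circ c_2=\tilde{f}\circ c_1=f$ (modulo the identification of $E$ with $c_2(E)$), so $\tilde{f}\circ C$ extends $f$ to the $\mathcal{H}_2$-compactification. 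Therefore $f\in i(\mathcal{H}_2)$.

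Second, for idempotency, I would invoke Proposition \ref{inv}, which states that the $\mathcal{H}$-compactification and the $i(\mathcal{H})$-compactification are equivalent. By the proposition on equivalences proved earlier in the section (equivalent Hausdorff preorder compactifications are related by a preorder homeomorphism $H$), any continuous isotone extension of $f$ to one of the two compactifications yields, by composition with $H$ or $H^{-1}$, a continuous isotone extension to the other. Hence the class of continuous isotone $f:E\to [0,1]$ that extend to the $\mathcal{H}$-compactification coincides with that of functions extending to the $i(\mathcal{H})$-compactification, which is precisely the equality $i(\mathcal{H})=i(i(\mathcal{H}))$.

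There is no substantial obstacle: the corollary is a direct bookkeeping exercise on top of the two previous propositions. The only points that deserve brief verification are that $\tilde{f}\circ C$ is genuinely isotone (immediate, since $C$ is isotone and $\tilde{f}$ is isotone) and that the extension picked up via the equivalence homeomorphism is unique (it is, because $c(E)$ is dense in the Hausdorff space $cE$).
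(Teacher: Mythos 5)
Your proof is correct and follows essentially the same route as the paper: monotonicity via the domination map of Proposition \ref{pek} and pullback of the extension, and idempotency via the equivalence of the $\mathcal{H}$- and $i(\mathcal{H})$-compactifications from Proposition \ref{inv}. Your detour through the preorder homeomorphism between equivalent Hausdorff compactifications is harmless but not needed, since composing with the isotone maps witnessing the equivalence already transfers extensions.
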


\begin{proof}
If a continuous isotone function $f:E\to [0,1]$ can be extended as a
continuous isotone function to the $i(\mathcal{H})$-compactified
space, i.e. $f\in i(i(\mathcal{H}))$ then, as the
$\mathcal{H}$-compactification and the
$i(\mathcal{H})$-compactification are equivalent, it can be extended
as a continuous isotone function to the $\mathcal{H}$-compactified
space that is $f\in i(\mathcal{H})$.

For the last statement, let $f \in i(\mathcal{H}_1)$ that is $f:E\to
[0,1]$ can be extended as a continuous isotone function $f_1:c_1E\to
[0,1]$ to the $\mathcal{H}_1$-compactified space. But the
$\mathcal{H}_2$-compactification dominates over the
$\mathcal{H}_1$-compactification, that is if $c_2: E\to c_2E$ is the
former and $ c_1: E\to c_1E$ is the latter, there is a continuous
isotone function $C:c_2E\to c_1E$ such that $C\circ c_2=c_1$. The
pullback with $C$ of the extension to $c_1E$, namely $f_2=f_1\circ
C$,  is a continuous isotone extension on $c_2E$ of $f$ thus $f\in
i(\mathcal{H}_2)$.
\end{proof}

\begin{theorem} \label{kls}
The $\mathcal{H}$-compactification   is the smallest Hausdorff
$T_2$-preordered  compactification for which the function belonging
to $\mathcal{H}$ are extendable as continuous isotone functions to
the compactified space.
\end{theorem}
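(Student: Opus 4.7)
The plan is to reduce the theorem directly to Propositions \ref{mqo} and \ref{pek}, both of which do the real work. Two things need to be verified: (i) the $\mathcal{H}$-compactification itself belongs to the class of compactifications under consideration, i.e.\ every $h\in\mathcal{H}$ does extend to a continuous isotone function on it, and (ii) every other member of the class dominates it.

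For (i) I would just cite the observation made immediately after the construction of the $\mathcal{H}$-compactification: if $c:E\to cE\subset [0,1]^{\mathcal{H}\cup\mathcal{C}}$ is the embedding defining it, then for each $h\in\mathcal{H}$ the projection $\pi_h\vert_{cE}:cE\to[0,1]$ is continuous (restriction of a coordinate projection) and isotone (by the very definition of the preorder $\preceq$ on $[0,1]^{\mathcal{H}\cup\mathcal{C}}$ induced by the factors indexed by $\mathcal{H}$), and it extends $h\circ c^{-1}$. So $\mathcal{H}\subset i(\mathcal{H})$ for the $\mathcal{H}$-compactification.

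For (ii), let $c':E\to c'E$ be any Hausdorff $T_2$-preorder compactification such that every $h\in\mathcal{H}$ extends to a continuous isotone function on $c'E$. Denote by $\mathcal{H}'\subset\mathcal{F}$ the (possibly larger) family of \emph{all} continuous isotone functions $E\to[0,1]$ that extend in this way to $c'E$; by hypothesis $\mathcal{H}\subset\mathcal{H}'$, and since $\mathcal{H}$ already represents $\le$, so does $\mathcal{H}'$. Proposition \ref{mqo} then applies to $c'$ with the family $\mathcal{H}'$ and tells us that $c'$ dominates the $\mathcal{H}'$-compactification. By Proposition \ref{pek}, the inclusion $\mathcal{H}\subset\mathcal{H}'$ gives that the $\mathcal{H}'$-compactification dominates the $\mathcal{H}$-compactification. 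Composing the two domination relations (which amounts to composing the two continuous isotone surjections whose existence is asserted by the definition of $\le$ on compactifications) yields $c\le c'$, i.e.\ the $\mathcal{H}$-compactification is dominated by $c'$.

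Combining (i) and (ii) gives the theorem: among all Hausdorff $T_2$-preorder compactifications to which every $h\in\mathcal{H}$ extends as a continuous isotone function, the $\mathcal{H}$-compactification is the smallest. I do not foresee a real obstacle; the content of the theorem is essentially a rephrasing of Propositions \ref{mqo} and \ref{pek}, the only mild subtlety being the enlargement from $\mathcal{H}$ to $\mathcal{H}'$ in order to apply Proposition \ref{mqo} (which requires the family used to be exactly the family of \emph{all} extendable functions on the given compactification).
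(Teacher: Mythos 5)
Your proposal is correct and follows essentially the same route as the paper: the paper's proof also takes the family $\mathcal{G}$ of \emph{all} continuous isotone functions extendable to the given compactification $c'E$ (your $\mathcal{H}'$), applies Proposition \ref{mqo} to get that $c'$ dominates the $\mathcal{G}$-compactification, and then Proposition \ref{pek} with $\mathcal{H}\subset\mathcal{G}$. Your step (i), that the $\mathcal{H}$-compactification itself admits the required extensions via the projections $\pi_h\vert_{cE}$, is the observation the paper records just after the construction and implicitly relies on, so making it explicit is a harmless (indeed welcome) addition.
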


\begin{proof}
Let $c: E\to cE$ be a Hausdorff $T_2$-preordered  compactification
for which the functions belonging to $\mathcal{H}$ are extendable.
By Prop.\ \ref{mqo} the compactification $c$ dominates a
$\mathcal{G}$-compactification where $\mathcal{G}$ is the set of
continuous isotone functions on $E$ with value in [0,1] which are
extendable with these properties to $cE$. Thus $\mathcal{H}\subset
\mathcal{G}$ and by Prop.\ \ref{pek} the
$\mathcal{G}$-compactification dominates over the
$\mathcal{H}$-compactification, thus $c$ dominates the
$\mathcal{H}$-compactification.
\end{proof}

\begin{definition}
The {\em family of invariant sets} $\mathcal{I}$ is the set of
subsets $\mathcal{H}\subset \mathcal{F}$ which satisfy
$G(\le)=\bigcap_{h\in \mathcal{H}}G_h$ and are left invariant by
$i$. The set $\mathcal{I}$ is ordered by inclusion.
\end{definition}

The next theorem serves to define the family of continuous isotone
functions $\mathcal{S}$ which characterizes the smallest
compactification.

\begin{theorem}
If the smallest Hausdorff $T_2$-preorder compactification exists
then it is a $\mathcal{S}$-compactification where
$G(\le)=\bigcap_{h\in \mathcal{S}}G_h$, $i(\mathcal{S})=\mathcal{S}$
and $\mathcal{S}=\bigcap \mathcal{I}$.
\end{theorem}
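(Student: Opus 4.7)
The plan is to define $\mathcal{S}$ as the family of continuous isotone functions $f:E\to[0,1]$ that extend as continuous isotone functions to the assumed smallest Hausdorff $T_2$-preorder compactification $s:E\to sE$, and then verify the three stated properties in order.

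First I would apply Proposition \ref{mqo} with $c:=s$. This produces exactly the family $\mathcal{S}$ just defined together with a $\mathcal{S}$-compactification dominated by $s$, and in the process it delivers the required property $G(\le)=\bigcap_{h\in\mathcal{S}}G_h$. Since the $\mathcal{S}$-compactification is itself a Hausdorff $T_2$-preorder compactification, minimality of $s$ forces the reverse domination; hence $s$ is equivalent to the $\mathcal{S}$-compactification, establishing the first conclusion of the theorem.

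Next I would verify $i(\mathcal{S})=\mathcal{S}$. The inclusion $\mathcal{S}\subset i(\mathcal{S})$ is immediate from the definition of $i$. For the converse, an $f\in i(\mathcal{S})$ extends continuously and isotonically to the $\mathcal{S}$-compactification; transporting this extension through the preorder homeomorphism realizing the equivalence with $s$ (cf.\ Proposition \ref{inv} and the reasoning of Corollary \ref{jyh}) yields an extension of $f$ to $sE$, so $f\in\mathcal{S}$. Therefore $\mathcal{S}\in\mathcal{I}$ and consequently $\bigcap\mathcal{I}\subset\mathcal{S}$.

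The remaining inclusion $\mathcal{S}\subset\bigcap\mathcal{I}$ is the main content. Let $\mathcal{H}\in\mathcal{I}$ be arbitrary and write $c_\mathcal{H}:E\to c_\mathcal{H}E$ for its $\mathcal{H}$-compactification, which is a Hausdorff $T_2$-preorder compactification; by minimality of $s$ there is a continuous isotone map $C:c_\mathcal{H}E\to sE$ with $C\circ c_\mathcal{H}=s$. For any $h\in\mathcal{S}$ with continuous isotone extension $\tilde{h}:sE\to[0,1]$, the composition $\tilde{h}\circ C:c_\mathcal{H}E\to[0,1]$ is a continuous isotone extension of $h$ (up to the identifications of $E$ with $c_\mathcal{H}(E)$ and $s(E)$), so $h\in i(\mathcal{H})=\mathcal{H}$. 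Arbitrariness of $\mathcal{H}$ yields $\mathcal{S}\subset\bigcap\mathcal{I}$, which closes the argument. The only mild obstacle, present in both of the last two steps, is keeping track of the fact that ``extendability as a continuous isotone function'' is an invariant of the equivalence class of the compactification; once this is observed, all three defining properties of $\mathcal{S}$ fall directly out of Proposition \ref{mqo} and the definitions of $i$ and $\mathcal{I}$.
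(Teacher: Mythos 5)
Your proposal is correct and follows essentially the same route as the paper: obtain a $\mathcal{S}$-compactification equivalent to the assumed smallest one via Proposition \ref{mqo}, check $i(\mathcal{S})=\mathcal{S}$ so that $\mathcal{S}\in\mathcal{I}$, and prove $\mathcal{S}\subset\mathcal{H}$ for every $\mathcal{H}\in\mathcal{I}$ by pulling back the extensions of elements of $\mathcal{S}$ through the domination map supplied by minimality. The only differences are cosmetic: you define $\mathcal{S}$ upfront as the functions extendable to $sE$ and verify its $i$-invariance directly, where the paper invokes Proposition \ref{inv}, and you argue the final inclusion directly rather than by the paper's contrapositive.
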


\begin{proof}
Suppose that there is a Hausdorff $T_2$-preorder compactification
which is dominated by all the other Hausdorff $T_2$-preorder
compactifications, then by Prop.\ \ref{mqo} it is equivalent to a
$\mathcal{S}$-compactification where $\mathcal{S}\subset
\mathcal{F}$ is such that $G(\le)=\bigcap_{h\in \mathcal{S}}G_h$.

By Prop.\ \ref{inv} $\mathcal{S}$ can be chosen such that
$\mathcal{S}=i(\mathcal{S})$, thus belonging to $\mathcal{I}$.
Clearly, $\bigcap \mathcal{I}\subset \mathcal{S}$ because
$\mathcal{S}\in \mathcal{I}$. Suppose that $\mathcal{H}\in
\mathcal{I}$ and that $f\in \mathcal{F}$, $f\notin
\mathcal{H}=i(\mathcal{H})$. This means that $f$ is not extendable
as a continuous isotone function to the $\mathcal{H}$-compactified
space. If $C$ is the continuous isotone map from the
$\mathcal{H}$-compactified space to the $\mathcal{S}$-compactified
space (as the $\mathcal{S}$-compactification is dominated by all the
other compactifications) one has that if $f$ were extendable to the
$\mathcal{S}$-compactified space then by pullbacking the extension
to the $\mathcal{H}$-compactified space through $C$ one would get an
extension in the $\mathcal{H}$-compactified space. The contradiction
proves that $f\notin i(\mathcal{S})=\mathcal{S}$ thus
$\mathcal{S}\subset \mathcal{H}$, and finally $\mathcal{S}\subset
\bigcap \mathcal{I}$.
\end{proof}

\begin{remark} \label{bhx}
The smallest compactification does not necessarily exist.  For
instance, if $E$ is non-compact and endowed with the discrete
preorder, the $\mathcal{C}$-compactification dominates over the
$\mathcal{C}^-$-compactification and the
$\mathcal{C}^+$-compacti\-fi\-ca\-tion (see Remark \ref{nux}),
indeed $\mathcal{C}^\mp\subset \mathcal{C}$ see Prop.\ \ref{pek}.
Stated in another way, the one-point compactification endowed with
the discrete preorder dominates over that in which the added point
is less (resp.\ greater) than any other point (indeed, the former
has a smaller preorder). However, $\mathcal{C}^+$ is not contained
in $i(\mathcal{C}^-)$ and conversely, thus the $\mathcal{C}^--$ and
$\mathcal{C}^+-$ compacti\-fi\-ca\-tions differ. Actually, it is
easy to realize that they are minimal, thus there is no smallest
compactification.
\end{remark}

\section{Conclusions}

We have investigated the compactification of topological preordered
spaces, showing the existence of a largest Hausdorff $T_2$-preorder
compactification for every $T_2$-preordered Tychonoff space for
which the preorder is represented by the continuous isotone
functions. An interesting subclass of this family is that of locally
compact $\sigma$-compact Hausdorff $T_2$-preordered spaces
\cite{minguzzi11f}.
 It turns out that this largest compactification is
essentially the Stone-\v Cech compactification endowed with a
suitable preorder. It can be characterized as the Hausdorff
$T_2$-preorder compactification for which all the continuous
function can be continuously extended and the continuous isotone
function do so preserving the isotone property. If the preorder is
an order or the quotient space is a completely regularly ordered
space it is also possible to show a clean relation with Nachbin's
$T_2$-order compactification.

We have considered the problem of identifying the smallest Hausdorff
$T_2$-preorder compactification whenever it exists. We have shown
that it  corresponds necessarily to the compactification obtained
demanding the extendibility of a suitable set of continuous isotone
functions. Generically, this set $\mathcal{S}$ is expected to be
strictly included in the  full set $\mathcal{F}$ of continuous
isotone functions with value in [0,1].

The  approach followed in this work  relies on the study of
continuous isotone functions and their extension properties. We
close noting that filter approaches are also possible. For instance
Choe and Park \cite{choe79} have constructed a Wallman type preorder
compactification which has been subsequently extensively
investigated in \cite{kent85,kent90,kent93,kent95,kunzi08} together
with some variations. For instance, in \cite{kent95} the authors
show that it is possible to obtain the Nachbin compactification from
the Wallman compactification by identifying the points that take the
same value on continuous isotone functions. We have followed a
similar procedure to show that the Nachbin compactification $nE$ can
be obtained from the same functional quotient starting from $\beta
E$.

\section*{Acknowledgments}
I thank a referee for pointing out Remark \ref{bhx}. This work has
been partially supported by ``Gruppo Nazionale per la Fisica
Matematica''  (GNFM) of ``Instituto Nazionale di Alta Matematica''
(INDAM).



\end{document}